\documentclass[12pt,reqno,a4paper]{amsart}
\usepackage[english]{babel}
\usepackage[applemac]{inputenc}
\usepackage[T1]{fontenc}
\usepackage{palatino}
\usepackage{amsfonts}
\usepackage{amsmath}
\usepackage{amssymb}
\usepackage{amsthm}
\usepackage{calrsfs}
\usepackage{graphicx}
\usepackage{verbatim}
\usepackage{caption}
\usepackage[colorlinks = true, citecolor = black, linkcolor = black, urlcolor = black, pdfstartview=FitH]{hyperref}

\DeclareCaptionType{figurecaption}[Figure][]

\newcommand{\R}{\mathbb{R}}

\newcommand{\N}{\mathbb{N}}

\newcommand{\cB}{\mathcal{B}}

\newcommand{\cL}{\mathcal{L}}

\newcommand{\cF}{\mathcal{F}}

\newcommand{\cI}{\mathcal{I}}

\newcommand{\cP}{\mathcal{P}}

\newcommand{\por}{\operatorname{por}}

\newcommand{\upor}{\overline{\por}}

\newcommand{\spt}{\operatorname{spt}}
\newcommand{\Tan}{\operatorname{Tan}}

\renewcommand{\emptyset}{\varnothing}
\renewcommand{\epsilon}{\varepsilon}
\renewcommand{\rho}{\varrho}
\renewcommand{\phi}{\varphi}

\renewcommand{\j}{\mathtt{j}}
\newcommand{\card}{\operatorname{card}}

\theoremstyle{plain}
\newtheorem{thm}{Theorem}
\newtheorem{theorem}[thm]{Theorem}
\newtheorem{lemma}{Lemma}
\newtheorem{proposition}{Proposition}
\newtheorem*{proposition*}{Proposition}
\newtheorem{cor}{Corollary}

\newtheorem{condition}{Condition}

\theoremstyle{definition}
\newtheorem{definition}{Definition}

\newtheorem{remark}{Remark}

\newtheorem{notations}{Notations}
\newtheorem{notation}{Notation}

\numberwithin{equation}{section}

\addtolength{\hoffset}{-1.15cm}
\addtolength{\textwidth}{2.3cm}
\addtolength{\voffset}{0.45cm}
\addtolength{\textheight}{-0.9cm}

\pagestyle{headings}

\author{Tuomas Orponen and Tuomas Sahlsten}\thanks{The research was supported by the Finnish Centre of Excellence in Analysis and Dynamics Research, and the second author also acknowledges the support from Emil Aaltonen Foundation.}
\title{Tangent measures of non-doubling measures}
\address{Department of Mathematics and Statistics, University of Helsinki, P.O.B. 68, FI-00014 Helsinki, Finland}
\email{tuomas.orponen@helsinki.fi} \email{tuomas.sahlsten@helsinki.fi}
\subjclass[2000]{28A12 (Primary); 28A75, 28A80 (Secondary)}

\numberwithin{equation}{section}
\numberwithin{thm}{section}
\numberwithin{lemma}{section}
\numberwithin{proposition}{section}
\numberwithin{cor}{section}
\numberwithin{claim}{section}
\numberwithin{definition}{section}
\numberwithin{example}{section}
\numberwithin{remark}{section}
\numberwithin{notations}{section}
\numberwithin{reductions}{section}
\numberwithin{notation}{section}

\arraycolsep 1pt

\begin{document}

\sloppy

\maketitle

\begin{abstract}
We construct a non-doubling measure on the real line, all tangent measures of which are equivalent to Lebesgue measure.
\end{abstract}

\section{Introduction}\label{sec:1}

Tangent measures, see Definition \ref{deftang}, were introduced by D. Preiss in \cite{ref7} to solve an old conjecture on rectifiability and densities. Several examples show that singularity alone of a measure yields little information on its tangent measures. The constructions of Preiss \cite[Example 5.9]{ref7} and Freedman and Pitman \cite{ref3} exhibit purely singular measures on $\R$, all tangent measures of which are constant multiples of Lebesgue measure, or \emph{$1$-flat} in the language of Preiss. 
These examples, or in general measures with only $1$-flat tangent measures, are \textit{doubling} in the sense that the \textit{doubling constant}
$$D(\mu,x) := \limsup_{r \searrow 0} \frac{\mu(B(x,2r))}{\mu(B(x,r))}$$
is finite at $\mu$ almost every $x \in \R^d$. This is a consequence of the following characterisation, which is a combination of the statements of \cite[Proposition 2.2]{ref7} and \cite[Corollary 2.7]{ref7}.

\begin{thm}
\label{nondoublingcharacterisation}
Let $\mu$ be a measure on $\R^d$ and $x \in \R^d$ with $\Tan(\mu,x)\neq \emptyset$. Then
$$D(\mu,x) = \infty \quad \Longleftrightarrow \quad \sup_{\nu \in \Tan(\mu,x)} \frac{\nu(B(0,R))}{\nu(B(0,1))} = \infty\quad \text{ for every } R > 1.$$
\end{thm}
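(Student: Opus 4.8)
My plan is to treat the two implications separately, both times going through the elementary fact that $D(\mu,x)<\infty$ holds iff there are $C,r_0>0$ with $\mu(B(x,2r))\le C\mu(B(x,r))$ for $0<r<r_0$, in which case iteration gives $\mu(B(x,Tr))\le C^{\lceil\log_2 T\rceil}\mu(B(x,r))$ for every $T\ge1$ and every sufficiently small $r$.

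For the direction ``$\Longleftarrow$'' I would prove the sharper contrapositive: if $D(\mu,x)<\infty$, then $\sup_{\nu\in\Tan(\mu,x)}\nu(B(0,2))/\nu(B(0,1))<\infty$, which already falsifies the right-hand side read with $R=2$. Given $\nu=\lim_i c_iT_{x,r_i}\mu\in\Tan(\mu,x)$ with $r_i\searrow0$, I would use that $\nu(\partial B(0,s))=0$ for all but countably many $s$, so that $\nu(B(0,s))=\lim_i c_i\mu(B(x,sr_i))$ by the portmanteau theorem; combined with the uniform doubling bound for $\mu$ along the $r_i$, this yields, for such $s<1<2<s'$,
\[
\frac{\nu(B(0,2))}{\nu(B(0,1))}\ \le\ \frac{\nu(B(0,s'))}{\nu(B(0,s))}\ =\ \lim_i\frac{\mu(B(x,s'r_i))}{\mu(B(x,sr_i))}\ \le\ C^{\lceil\log_2(s'/s)\rceil},
\]
a bound independent of $\nu$. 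Positivity of the denominator, i.e.\ $0\in\spt\nu$, follows from the same doubling bound: if $c_i\mu(B(x,sr_i))\to0$ for arbitrarily small $s$ then $c_i\mu(B(x,Tr_i))\to0$ for every $T$, forcing $\nu=0$.

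For the direction ``$\Longrightarrow$'' I would first reduce the problem to producing, for a \emph{single} $R_0>1$, tangent measures with arbitrarily large ball ratio. This uses that $\Tan(\mu,x)$ is invariant under the dilations $T_{0,\lambda}$ (indeed $c_iT_{x,\lambda r_i}\mu=T_{0,\lambda}(c_iT_{x,r_i}\mu)\to T_{0,\lambda}\nu$), together with $T_{0,\lambda}\nu(B(0,R))=\nu(B(0,\lambda R))$: if the supremum were finite, say $=C_0$, for one $R_0>1$, then $\nu(B(0,R_0\lambda))\le C_0\nu(B(0,\lambda))$ for all $\nu,\lambda$, hence $\nu(B(0,R\lambda))\le C_0^{\lceil\log_{R_0}R\rceil}\nu(B(0,\lambda))$ for every $R>1$, so the supremum would be finite for every $R$; the contrapositive is the desired reduction. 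Now suppose $D(\mu,x)=\infty$ and fix $R_0=2$. Choosing scales $s_i\searrow0$ with $\mu(B(x,2s_i))/\mu(B(x,s_i))\to\infty$, the goal is to renormalise the blow-ups $T_{x,s_i}\mu$ by suitable constants $c_i$ so that $c_iT_{x,s_i}\mu$ converges along a subsequence to a \emph{nonzero} Radon measure $\nu\in\Tan(\mu,x)$ with $\nu(B(0,1))=0<\nu(B(0,2))$; such a measure makes $\sup_{\nu}\nu(B(0,2))/\nu(B(0,1))=\infty$ at once, and then the reduction gives the conclusion for every $R>1$. Equivalently, the outcome to aim for is that some $\nu\in\Tan(\mu,x)$ has $0\notin\spt\nu$, since then a suitable dilate $T_{0,\lambda}\nu$ has vanishing unit-ball mass but positive $B(0,R)$-mass for every $R>1$. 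The delicate point — which I expect to be the main obstacle — is securing this precompactness of the renormalised blow-ups together with nonvanishing of the limit: the natural choice $c_i=1/\mu(B(x,2s_i))$ gives $\nu(B(0,1))=0$ and $\nu(B(0,2))\ge1$ formally, but controlling $\mu(B(x,Ts_i))/\mu(B(x,2s_i))$ uniformly in $i$ for large $T$ (or replacing the $s_i$ by better-chosen bad scales) is precisely where the precompactness characterisation of the nonemptiness of $\Tan(\mu,x)$ recorded in \cite[Proposition 2.2]{ref7} must be invoked; the hypothesis $\Tan(\mu,x)\neq\emptyset$ is essential here, since a non-doubling measure may have no tangent measures at all.
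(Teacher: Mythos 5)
The paper contains no proof of Theorem \ref{nondoublingcharacterisation}: it is quoted verbatim as ``a combination of the statements of \cite[Proposition 2.2]{ref7} and \cite[Corollary 2.7]{ref7}'', so your attempt can only be measured against that attribution. With that caveat, your ``$\Longleftarrow$'' direction is correct and essentially complete: iterating the doubling bound, passing to continuity radii so that $\nu(B(0,s))=\lim_i c_i\mu(B(x,sr_i))$, and ruling out $\nu(B(0,1))=0$ by the same iteration is exactly the standard argument (it is the content of Preiss's Corollary 2.7, cf.\ also \cite[Remark 14.4]{ref4}). The dilation reduction at the start of ``$\Longrightarrow$'' --- that $\Tan(\mu,x)$ is closed under $T_{0,\lambda\sharp}$ and that finiteness of the supremum for one $R_0>1$ propagates to all $R>1$ --- is also correct and is the right way to localise the problem to a single radius.

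The gap is in the remainder of ``$\Longrightarrow$'', exactly where you place it, and it is the entire mathematical content of that implication. Closing it by ``invoking \cite[Proposition 2.2]{ref7}'' is circular here: that proposition \emph{is} (the hard half of) the statement you are asked to prove. Concretely, nothing in your argument shows that the renormalised blow-ups $\mu(B(x,2s_i))^{-1}T_{x,s_i\sharp}\mu$ at the bad scales are precompact --- the quantities $\mu(B(x,Ts_i))/\mu(B(x,2s_i))$ may blow up for large $T$, and the hypothesis $\Tan(\mu,x)\neq\emptyset$ controls the measure only along the (a priori unrelated) scales generating the known tangent measure. Moreover, your intermediate target is too strong: a tangent measure $\nu$ with $\nu(B(0,1))=0<\nu(B(0,2))$ (equivalently, after dilation, $0\notin\spt\nu$) is \emph{sufficient} for the supremum to be infinite, but the theorem does not assert its existence, and it need not exist --- the supremum can be infinite while every individual ratio is finite. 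The known route instead produces, for each $M$, a tangent measure with ratio at least comparable to $M$: one interpolates between the scales $\rho_j$ along which the given tangent measure arises (where the normalised blow-ups $\mu(B(x,r))^{-1}T_{x,r\sharp}\mu$ are under control, by \cite[Remark 14.4(1)]{ref4}) and the scales where $\mu(B(x,2r))/\mu(B(x,r))$ is huge, selecting a first scale at which the ratio exceeds $M$ so that the measure is still dominated at all larger comparable scales; this is the step your proposal is missing.
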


The purpose of this paper is to study the tangent measures of \textit{non-doubling measures}, that is, measures satisfying $D(\mu,x) = \infty$ at $\mu$ almost every $x \in \R^d$. Non-doubling is a particularly strong form of singularity, and, as Theorem \ref{nondoublingcharacterisation} indicates, it poses some restrictions on tangent measures -- as opposed to mere singularity, which in general has no impact on their behaviour. Motivated by this observation, we set out to study whether more would be true: do tangent measures of non-doubling measures always inherit some degree of singularity? Our investigation concluded with the following result:
\begin{thm}
\label{main}
There exists a non-doubling measure $\mu$ on $\R$ such that every tangent measure $\nu$ of $\mu$ is equivalent to Lebesgue measure.
\end{thm}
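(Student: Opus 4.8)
The plan is to realise $\mu$ as a weak limit of a martingale of measures on $[0,1]$, obtained by repeatedly subdividing intervals. Fix a very fast-growing sequence $(N_n)$ of integers and let the generation-$n$ intervals arise by splitting each generation-$(n-1)$ interval into $N_n$ congruent pieces; write $\ell_n$ for their common length. To pass from generation $n-1$ to generation $n$ I would redistribute, inside each generation-$(n-1)$ interval $I$, its current mass according to a profile $g_n\colon I\to(0,\infty)$ with $\int_I g_n\,dx=|I|$: most of $I$ keeps roughly uniform density, but on a short sub-interval $J_n(I)\subset I$ -- a \emph{valley} of relative length $\eta_n$ carrying relative mass $p_n$ -- the density is a power $x\mapsto c|x-m|^{\alpha_n}$ about the midpoint $m$ of $J_n(I)$, matched to the ambient density at the endpoints of $J_n(I)$. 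Here $\eta_n\searrow0$, $\sum_n p_n=\infty$, and the exponents $\alpha_n\nearrow\infty$, with $N_n$ (hence $\ell_{n-1}/\ell_n$) chosen enormous relative to all earlier data. The resulting $\mu$ is supported on $[0,1]$ and, being non-doubling, is automatically singular with respect to Lebesgue measure.

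Non-doubling is the easy half. Since the conditional $\mu$-mass of the stage-$n$ valley inside each generation-$(n-1)$ interval is $p_n$ and $\sum_n p_n=\infty$, the conditional Borel--Cantelli lemma gives that $\mu$-a.e.\ $x$ lies in the stage-$n$ valley for infinitely many $n$; by placing the valley mass well away from the midpoint one can in fact force $\mu$-a.e.\ such $x$ to lie in a nested run of valleys whose length tends to infinity, infinitely often. For such an $x$ and a radius $r$ comparable to the distance from $x$ to the nearest valley endpoint at the appropriate generation, the ball $B(x,2r)$ reaches out of the valley into the ambient, comparatively high-density region, so $\mu(B(x,2r))$ dwarfs $\mu(B(x,r))$ by a factor growing with $\alpha_n$ (or with the length of the nested run). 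Hence $D(\mu,x)=\infty$ for $\mu$-a.e.\ $x$.

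The heart of the proof is the identification of $\Tan(\mu,x)$ at a $\mu$-typical $x$. The guiding principle is that at a given small scale $r$ only generations with $\ell_n$ comparable to $r$ affect the local picture: finer generations oscillate at scales $\ll r$ and, being mass-preserving on each interval, wash out to a flat contribution, while coarser generations are essentially constant on $B(x,Rr)$. Thus every blow-up that converges at all converges to a measure whose density relative to Lebesgue measure is, locally, a finite product of rescaled valley profiles -- a positive locally integrable function -- and one checks the only possibilities are constant multiples of $\mathcal{L}$, of $|x-b|^{\beta}\mathcal{L}$, and of $e^{\lambda x}\mathcal{L}$ with $\beta\ge0$, $\lambda\in\R$ finite, all equivalent to Lebesgue measure. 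The crucial self-limiting mechanism ruling out singular tangents coming from the valleys is that if along some sequence of scales the relevant exponent $\beta$ (or rate $\lambda$) tended to infinity, the renormalised blow-ups would place essentially all of their mass outside every fixed $B(0,R)$ -- because $|x|^{\beta}$ and $e^{\lambda x}$ grow without bound once $x$ leaves the valley -- so such a sequence cannot converge to a non-zero locally finite measure and produces no tangent measure. Hence every genuine tangent measure comes from a bounded exponent, i.e.\ lies in the Lebesgue-equivalent family; yet the exponents are \emph{un}bounded across the whole family (precisely what makes $\mu$ non-doubling), so $\sup_{\nu\in\Tan(\mu,x)}\nu(B(0,R))/\nu(B(0,1))=\infty$ for every $R>1$, consistently with Theorem \ref{nondoublingcharacterisation}.

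The main obstacle, and what dictates the long list of growth conditions on $(N_n,\ell_n,p_n,\eta_n,\alpha_n)$, is making those last two points simultaneously rigorous: I would need to verify that in \emph{every} blow-up the finer generations genuinely average out (requiring $\ell_{n-1}/\ell_n$ to grow fast enough that the scale windows decouple), that thin and sparse valleys ($\eta_n\searrow0$, rare-but-infinitely-often hits) never cluster or nest so as to produce an atom or a measure vanishing on an interval, and that near each valley the profile is matched to its surroundings so that the ``steep $\Rightarrow$ mass escapes $\Rightarrow$ no tangent'' dichotomy has no loophole -- for instance no intermediate scale at which a blow-up converges to $\mathcal{L}$ restricted to a half-line, or to a sum of atoms. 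Once the local behaviour of $t\mapsto\mu(B(x,t))$ at a $\mu$-typical $x$ is pinned down as two-sided power/exponential bounds valid at all small $t$, with exponents unbounded over all scales but finite at each individual scale, Theorem \ref{nondoublingcharacterisation} recovers $D(\mu,x)=\infty$ from the unboundedness while the per-scale finiteness yields that every tangent measure of $\mu$ is equivalent to Lebesgue measure.
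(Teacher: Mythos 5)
Your construction is a legitimate variant of the paper's (power-law valleys with exponents $\alpha_n\nearrow\infty$ placed inside a nested interval scheme, in place of the single infinitely flat profile $\phi(t)=c\exp(1/(|t|-1))$ applied via pull-backs at the boundaries of the construction intervals), and your non-doubling argument --- second Borel--Cantelli for the independent valley events, plus a ball straddling the steep part of a valley --- matches Section \ref{sec:5} in structure and substance. The heuristic you identify for the tangent measures, namely that a blow-up sequence along which the local exponent diverges pushes all mass out of every bounded set and hence admits no non-zero locally finite limit, is also exactly the mechanism the paper exploits, via the uniform bound $\sup_i c_i\mu(B(x,Rr_i))\le C_R$ of Remark \ref{doublingremarkk} combined with Lemma \ref{muprop}(3).

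Nevertheless the tangent-measure half of your argument is a programme rather than a proof, and you say so yourself (``I would need to verify that\dots''). Two concrete gaps. First, the assertion that \emph{every} convergent blow-up has density equal to a finite product of rescaled valley profiles --- that fine generations wash out and coarse generations are constant, uniformly over all centres $x$ (not just $\mu$-typical ones), all scale sequences $r_i$, and all positions of $x$ relative to the valleys and to the interval endpoints --- is precisely the hard technical content of the result; in the paper it occupies Lemmas \ref{phiprop} and \ref{muprop} and the whole case analysis of Section \ref{sec:6} (normalising the blow-up, the dichotomy on $\card\cF_{J^i}$, the exceptional set $E$ of accumulating endpoints), and nothing in your outline substitutes for it. Second, your closing reduction is logically insufficient: pinning down $t\mapsto\mu(B(x,t))$ at the blow-up centre only controls $\nu(B(0,\delta))$, i.e.\ the density of the tangent measure at the single point $z=0$. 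Equivalence with Lebesgue measure requires $0<\Theta_{\ast}^{1}(\nu,z)\le\Theta^{\ast 1}(\nu,z)<\infty$ at all but discretely many $z$ (Proposition \ref{tgde}, then \cite[Theorem 6.9]{ref4}), which the paper obtains by running the comparability estimates for the off-centre balls $B(x+r_iz,\delta r_i)$ uniformly in $z$ (Condition \ref{suffdensity}); and Theorem \ref{nondoublingcharacterisation} cannot recover this for you, since it only relates $D(\mu,x)$ to the mass ratios $\nu(B(0,R))/\nu(B(0,1))$ and is silent on absolute continuity or on the behaviour of $\nu$ away from the origin.
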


In the spirit of the examples of Freedman-Pitman and Preiss, our construction shows that not even non-doubling guarantees any form of singularity for tangent measures.

\section{An application to porosity}\label{sec:2}

Theorem \ref{main} has implications to the theory of porosity, a degree of singularity, which has attained much attention in recent years in fractal geometry.

\begin{definition}For a measure $\mu$ on $\R^d$, $x \in \R^d$ and $r,\epsilon > 0$ write
\begin{align*}
\por(\mu,x,r,\epsilon) = \sup\{\delta > 0 : \,&\mbox{there exists } y \in \R^n \mbox{ with } B(y,\delta r) \subset B(x,r)\\
& \mbox{and }\mu(B(y,\delta r)) \leq \epsilon \mu(B(x,r))\}.
\end{align*}
The \textit{upper porosity} of $\mu$ at $x$ is then defined by 
$$\upor(\mu,x) = \lim_{\epsilon \searrow 0} \limsup_{r \searrow 0} \por(\mu,x,r,\epsilon).$$
A measure $\mu$ is \textit{upper porous} if $\upor(\mu,x) > 0$ at $\mu$ almost every $x \in \R^d$.
\end{definition}

Upper porosity was introduced by J--P. Eckmann, E. J\"arvenp\"a\"a and M. J\"arvenp\"a\"a in \cite{ref1}, and further investigated by M. E. Mera, M. Mor\'an, D. Preiss and L. Zaj\'{i}\v{c}ek in articles \cite{ref5,ref6}. A wide class of examples of upper porous measures was exhibited by V. Suomala in \cite{ref8}. In \cite{ref5} Mera and Mor\'an presented a characterisation of doubling upper porous measures in terms of tangent measures:

\begin{theorem}
\label{meramoran}
A doubling measure $\mu$ on $\R^d$ is upper porous if and only if for $\mu$ almost every $x \in \R^d$ there exists $\nu \in \Tan(\mu,x)$ with $\spt \nu \neq \R^{d}$.
\end{theorem}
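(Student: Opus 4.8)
The plan is to prove the two implications separately; I expect only the necessity direction to use doubling in an essential way. Throughout, for $x\in\R^d$ and $r>0$ I denote by $T_{x,r}\mu$ the blow-up $T_{x,r}\mu(A):=\mu(x+rA)$, so that the members of $\Tan(\mu,x)$ are precisely the nonzero, locally finite weak limits of sequences $c_iT_{x,r_i}\mu$ with $r_i\searrow0$ and $c_i>0$.

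For sufficiency ($\Leftarrow$) I would fix a point $x$ admitting some $\nu\in\Tan(\mu,x)$ with $\spt\nu\neq\R^d$ and show $\upor(\mu,x)>0$, which is all that is needed since this then holds at $\mu$-a.e. $x$. Choose an open ball $B(w,\rho)$ with $\nu(B(w,\rho))=0$ and, since $\nu\neq0$, a radius $M\ge|w|+\rho$ with $\nu(B(0,M))>0$; then $\partial B(w,\rho/2)\subset B(w,\rho)$ is $\nu$-null. Writing $\nu=\lim_ic_iT_{x,r_i}\mu$, the portmanteau theorem gives $c_iT_{x,r_i}\mu(B(w,\rho/2))\to\nu(B(w,\rho/2))=0$ and $\liminf_ic_iT_{x,r_i}\mu(B(0,M))\ge\nu(B(0,M))>0$; unwinding the blow-up and dividing, this becomes
\[
\frac{\mu\bigl(B(x+r_iw,\,(\rho/2)r_i)\bigr)}{\mu\bigl(B(x,\,Mr_i)\bigr)}\longrightarrow0.
\]
Consequently, for any $\epsilon>0$ and all large $i$, the ball $B(y_i,\delta R_i)$ with $R_i:=Mr_i$, $y_i:=x+r_iw$, $\delta:=\rho/(2M)$ lies in $B(x,R_i)$ (as $|w|+\rho/2\le M$) and has $\mu$-mass at most $\epsilon\,\mu(B(x,R_i))$, so $\por(\mu,x,R_i,\epsilon)\ge\delta$; letting $R_i\searrow0$ and then $\epsilon\searrow0$ yields $\upor(\mu,x)\ge\rho/(2M)>0$. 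No doubling is used here.

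For necessity ($\Rightarrow$) I would fix $x\in\spt\mu$ with $D(\mu,x)<\infty$ and $\upor(\mu,x)>0$ — this covers $\mu$-a.e. $x$ — set $p:=\upor(\mu,x)/2>0$, and produce a tangent measure with non-full support. Since $\epsilon\mapsto\limsup_{r\searrow0}\por(\mu,x,r,\epsilon)$ is nondecreasing with limit $2p$ as $\epsilon\searrow0$, for each $k$ it is $\ge2p$ at $\epsilon=1/k$, so one can choose $r_k\in(0,1/k)$, $\delta_k\in(p,1]$ and $y_k$ with $B(y_k,\delta_kr_k)\subset B(x,r_k)$ and $\mu(B(y_k,\delta_kr_k))\le(1/k)\,\mu(B(x,r_k))$. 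Normalising, set $\mu_k:=T_{x,r_k}\mu/\mu(B(x,r_k))$, so $\mu_k(B(0,1))=1$. Using the standard precompactness of normalised blow-ups of a doubling measure (a facet of Preiss's results reflected in Theorem \ref{nondoublingcharacterisation}), a subsequence of $(\mu_k)$ converges weakly to some $\nu\in\Tan(\mu,x)$, and $\nu\neq0$ because $\nu(\overline{B(0,1)})\ge\limsup_k\mu_k(\overline{B(0,1)})\ge1$. Writing $z_k:=(y_k-x)/r_k$, one has $B(z_k,\delta_k)\subset B(0,1)$, hence $|z_k|\le1-p$, and $\mu_k(B(z_k,\delta_k))=\mu(B(y_k,\delta_kr_k))/\mu(B(x,r_k))\le1/k$; passing to a further subsequence with $z_k\to z$ and $\delta_k\to\delta\in[p,1]$, one gets $B(z,\delta/2)\subset B(z_k,\delta_k)$ for large $k$, whence
\[
\nu\bigl(B(z,\delta/2)\bigr)\le\liminf_k\mu_k\bigl(B(z,\delta/2)\bigr)\le\liminf_k\tfrac1k=0 .
\]
Thus the nonempty open ball $B(z,\delta/2)$ is disjoint from $\spt\nu$, so $\spt\nu\neq\R^d$, completing this direction.

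The routine but slightly fiddly point is the handling of the iterated limit $\lim_{\epsilon\searrow0}\limsup_{r\searrow0}$, dealt with by monotonicity in $\epsilon$ and a diagonal choice with $\epsilon_k=1/k$. The real obstacle is the precompactness used in the necessity argument: one must know that for a doubling $\mu$ and $\mu$-a.e. $x$ the quantities $\mu(B(x,Rr))/\mu(B(x,r))$ remain bounded as $r\searrow0$ for each fixed $R>1$, which is precisely what guarantees that the normalised blow-ups do not degenerate and that their weak limit is a genuine (nonzero, locally finite) tangent measure. This is exactly the content, in contrapositive form, of Theorem \ref{nondoublingcharacterisation}.
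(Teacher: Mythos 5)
Your argument is correct, but note that the paper does not prove Theorem \ref{meramoran} at all: it is quoted from Mera and Mor\'an \cite{ref5}, so there is no in-paper proof to compare against. Your two implications are the standard ones. The sufficiency half is sound: a $\nu$-null ball $B(w,\rho)$ pulled back through the blow-ups forces $\mu(B(x+r_iw,(\rho/2)r_i))/\mu(B(x,Mr_i))\to 0$ (upper semicontinuity on the compact null ball, lower semicontinuity on an open ball of positive $\nu$-measure), which gives $\upor(\mu,x)\geq \rho/(2M)$; and you are right that no doubling enters here — this is consistent with the paper's Corollary \ref{counterporosity}, which exhibits a failure only of the necessity direction in the non-doubling case. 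The necessity half is also correct: monotonicity in $\epsilon$ lets you diagonalise with $\epsilon_k=1/k$, the finiteness of $D(\mu,x)$ at $\mu$-a.e.\ $x$ gives the uniform bounds $\mu(B(x,Rr_k))\leq C_R\,\mu(B(x,r_k))$ needed for the normalised blow-ups $\mu_k$ to subconverge to a nonzero $\nu\in\Tan(\mu,x)$, and the limiting ball $U(z,\delta/2)$ with $\delta\geq p$ is $\nu$-null by lower semicontinuity on open sets. The one step you state rather than prove — precompactness of the normalised blow-ups — is indeed the only place doubling is essential, and it is a standard consequence of the pointwise finiteness of the doubling constant (cf.\ \cite[Chapter 14]{ref4}); flagging it as the key input is appropriate.
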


In 2009, Suomala asked us whether Theorem \ref{meramoran} holds without the doubling assumption. Note that the question makes sense since $\Tan(\mu,x) \neq \emptyset$ at $\mu$ almost every $x \in \R^d$ even without the doubling assumption, see \cite[Theorem 2.5]{ref7}. The non-doubling measure $\mu$ constructed in Theorem \ref{main} is upper porous by \cite[Proposition 3.3]{ref6}, yet every tangent measure $\nu$ of $\mu$ is equivalent to Lebesgue measure, so $\spt \nu = \R$. Hence $\mu$ answers Suomala's question in the negative:

\begin{cor}[to Theorem \ref{main}]
\label{counterporosity}
The measure $\mu$ in Theorem \ref{main} is upper porous, yet $\spt \nu = \R$ for every tangent measure $\nu$ of $\mu$.
\end{cor}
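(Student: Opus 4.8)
The plan is to read off both assertions from Theorem~\ref{main}, using nothing beyond the definitions. Begin with upper porosity. Theorem~\ref{main} provides a measure $\mu$ that is non-doubling, i.e. $D(\mu,x) = \infty$ at $\mu$ almost every $x \in \R$. Fix such an $x$ and an arbitrary $\epsilon > 0$. By the definition of $D(\mu,x)$ there are arbitrarily small radii $r > 0$ with $\mu(B(x,2r)) > \epsilon^{-1}\mu(B(x,r))$. For any such $r$, choosing in the definition of $\por(\mu,x,2r,\epsilon)$ the centre $y = x$ and the parameter $\delta = 1/2$ produces an admissible hole, since $B(x,r) \subset B(x,2r)$ and $\mu(B(x,r)) < \epsilon\,\mu(B(x,2r))$; hence $\por(\mu,x,2r,\epsilon) \geq 1/2$. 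Consequently $\limsup_{r \searrow 0}\por(\mu,x,r,\epsilon) \geq 1/2$ for every $\epsilon > 0$, and letting $\epsilon \searrow 0$ gives $\upor(\mu,x) \geq 1/2$. As this holds at $\mu$ almost every $x$, the measure $\mu$ is upper porous. This elementary implication is exactly \cite[Proposition 3.3]{ref6}, which one could alternatively cite directly.

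For the support statement, let $\nu$ be any tangent measure of $\mu$, say $\nu \in \Tan(\mu,x)$. By Theorem~\ref{main}, $\nu$ is equivalent to Lebesgue measure on $\R$, so $\nu$ and Lebesgue measure have the same null sets; in particular $\nu(U) > 0$ for every nonempty open $U \subset \R$. Therefore $\R$ contains no nonempty open $\nu$-null set, which means precisely that $\spt\nu = \R$.

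There is no genuine obstacle in this deduction: all of the difficulty is already concentrated in Theorem~\ref{main}, while the porosity claim is immediate from the definition of the doubling constant and the support claim is immediate from absolute continuity in both directions. The only points deserving a word of care are that \emph{non-doubling} is, by the definition in the introduction, a statement holding at $\mu$ almost every point (so that $\upor(\mu,\cdot) > 0$ indeed holds $\mu$-almost everywhere, as upper porosity requires), and that \emph{equivalent} must be read as mutual absolute continuity, so that the full support of Lebesgue measure does transfer to $\nu$.
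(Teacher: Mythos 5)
Your proof is correct and follows the same route as the paper: the paper simply cites \cite[Proposition 3.3]{ref6} for the implication ``non-doubling $\Rightarrow$ upper porous'' and deduces $\spt\nu=\R$ from the equivalence with Lebesgue measure, exactly as you do. Your only addition is to unpack the cited proposition into the elementary $y=x$, $\delta=1/2$ argument, which is a valid proof of that implication.
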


\section{Definitions and the construction of $\mu$} \label{sec:3}

Below, a \emph{measure} is always a locally finite Borel measure.

\begin{notations} The closed and open balls with center $x \in \R^{d}$, $d \in \N$, and radius $r > 0$ will be denoted $B(x,r)$ and $U(x,r)$, respectively. The length of an interval $I \subset \R$ is denoted by $\ell(I)$. If $f : \R^d \to \R^d$ is Borel-measurable and $\mu$ is a measure on $\R^d$, we denote by $f_{\sharp} \mu$ the \textit{push-forward} measure of $\mu$ under the map $f$, defined for $A \subset \R^d$ by $f_{\sharp} \mu(A) = \mu(f^{-1} A)$. The \textit{support} of a measure $\mu$, denoted $\spt \mu$, is the set of all $x \in \R^d$ with $\mu(B(x,r)) > 0$ for all $r > 0$. A measure $\mu$ is \textit{absolutely continuous} with respect to a measure $\tau$, denoted $\mu \ll \tau$, if $\tau(A) = 0$ implies $\mu(A) = 0$ for all Borel sets $A$. Moreover, $\mu$ are $\tau$ are \textit{equivalent} if $\mu \ll \tau \ll \mu$. We write $\cL^{d}$ for Lebesgue measure on $\R^{d}$. If $\mu$ is a measure on $\R^{d}$ and $0 \leq s \leq d$, the $s$-dimensional \emph{upper-} and \emph{lower densities} of $\mu$ at $x \in \R^{d}$ are the quantities
\begin{displaymath} \Theta^{\ast s}(\mu,x) = \limsup_{r \searrow 0} \frac{\mu(B(x,r))}{(2r)^{s}} \quad \text{and} \quad \Theta_{\ast}^{s}(\mu,x) = \liminf_{r \searrow 0} \frac{\mu(B(x,r))}{(2r)^{s}}. \end{displaymath}
\end{notations}

\begin{definition}[Tangent measures]\label{deftang}Let $\mu$ be a measure on $\R^d$ and $x \in \R^d$. A non-zero measure $\nu$ on $\R^d$ is called a \emph{tangent measure} of $\mu$ at $x$ if $\nu$ is obtained as the weak limit of the sequence $c_{i} T_{x,r_{i}\sharp}\mu$, where $(c_{i})_{i \in \N}$ and $(r_{i})_{i \in \N}$ are sequences of positive constants, $r_{i} \searrow 0$, and  $T_{x,r_i}$ is the map $T_{x,r_i}(y) = (y-x)/r_i$, $y \in \R^d$, taking $B(x,r_{i})$ to $B(0,1)$. The set of all tangent measures of $\mu$ at $x$ is denoted  $\Tan(\mu,x)$.
\end{definition}

\textit{Construction of $\mu$}. The measure $\mu$ of Theorem \ref{main} is constructed by introducing a single auxiliary function $\phi$ and then using it repeatedly as a 'rule' to distribute mass, see Figure 1. 

\begin{figure}[h]
\label{fig1}
\begin{center}
\includegraphics[scale=0.5]{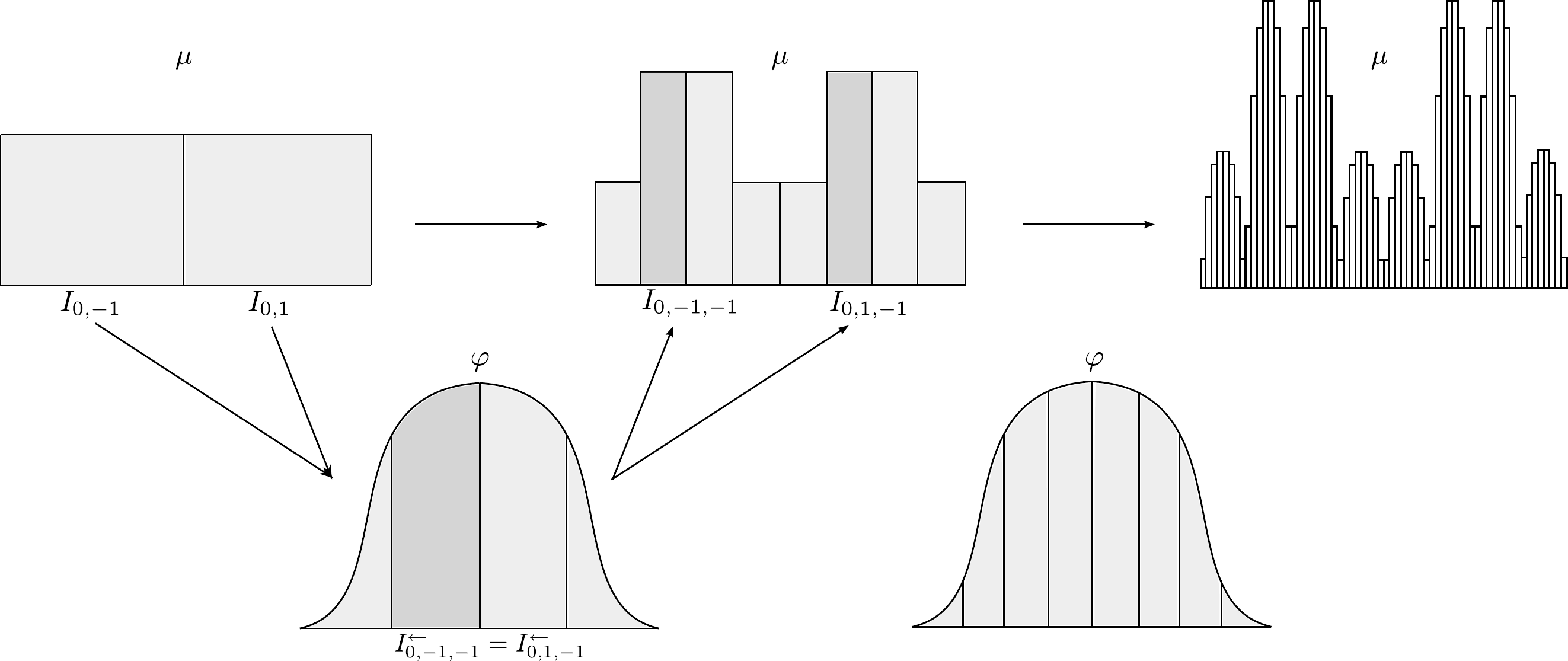}
\end{center}
\caption{The construction of the measure $\mu$: the weights of the construction intervals are determined by $\phi$-integrals over the pull-back intervals.}
\end{figure}

Let $\phi$ be the function
$$\phi(t) := c\exp \Big( \frac{1}{|t|-1} \Big), \quad t \in [-1,1],$$
where $c > 0$ is determined by the requirement $\int \phi = 1$. For $k \in \N$, we write
$$[2^k] := \{\pm i : i = 1,2,3,\dots,2^{k}\}, \quad \Sigma^0 := \{0\} \quad \text{and} \quad \Sigma^k := \Sigma^{k-1} \times [2^{k-1}].$$
Define $I_0 := [-1,1)$. Assuming that $I_\j$ is defined for some $\j \in \Sigma^k$, we divide $I_\j$ into $2^{k+1}$ half-open dyadic subintervals 
$$\cI_\j := \{I_{\j i} \subset I_\j : i \in [2^k]\}$$ 
of length $2^{-k-1} \ell(I_\j)$ enumerated from left to right. 
We call the intervals in $\cI_{\j}$, $\j \in \Sigma_{k}$, the \emph{generation $k$ construction intervals}. 
The set of all generation $k$ construction intervals will be denoted by $\cI_k$, so that $\cI_{k} = \bigcup_{\j \in \Sigma^k} \cI_\j$. 
For $A \subset [-1,1)$, we write $\phi(A) := \int_A \phi$. The $\mu$ measures of the construction intervals are defined by $\mu(I_{0}) = 1$, and
$$\mu(I_{\j i}) = \varphi(I_{\j i}^{\leftarrow})\mu(I_{\j}), \qquad \j \in \Sigma_{k},\,\, i \in [2^{k}],$$
where $I_{\j i}^{\leftarrow} \subset [-1,1)$ is the \emph{pull-back} of the interval $I_{\j i}$ defined by
$$I_{\j i}^{\leftarrow} = \begin{cases}[i2^{-k},(i+1)2^{-k}), & i < 0;\\
[(i-1)2^{-k},i2^{-k}), & i > 0.
\end{cases}$$ 
This procedure uniquely determines a probability measure on $[-1,1)$, see \cite[Proposition 1.7]{ref2}.

\section{Preliminary lemmas} \label{sec:4}

Before we can understand $\mu$, we need to establish some key properties of $\phi$: 

\begin{lemma}
\label{phiprop}
The function $\phi$ satisfies the following:
\begin{enumerate}
\item[(1)] if $0 < \tau < 1$, then for every interval $I \subset [-\tau,\tau]$, we have
$$\phi(I) \asymp_{\tau} \ell(I),$$
that is, $c^{-1} \ell(I) \leq \phi(I) \leq c \ell(I)$ for some $c > 0$ depending only on $\tau$.
\item[(2)] for every $M > 1$ and $N \in \N$ there exists $l_{N,M} > 0$ such that
$$\frac{\phi(I + \ell(I))}{\phi(I)} > M \quad \Longrightarrow \quad \frac{\phi(I + (N' + 1) \cdot \ell(I))}{\phi(I + N'\cdot\ell(I))} > M^{1/8}/2$$
for every $1 \leq N' \leq N$ and all intervals $I \subset [-1,1)$, as soon as $\ell(I) \leq l_{N,M}$;
\item[(3)] if $C > 1$, then
$$G_{C,\epsilon} := \frac{\phi([-1,-1+C\epsilon])}{\phi([-1,-1+\epsilon])} = \frac{\phi([1-C\epsilon,1])}{\phi([1-\epsilon,1])} \to +\infty, \quad \text{as } \epsilon \searrow 0.$$
\end{enumerate}
\end{lemma}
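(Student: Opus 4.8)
Parts (1) and (3) are routine; essentially all the work is in (2).

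\emph{Part (1).} On the compact interval $[-\tau,\tau]$ the function $\phi$ is continuous and strictly positive, so $0<m:=\min_{[-\tau,\tau]}\phi\le M_0:=\max_{[-\tau,\tau]}\phi<\infty$, and for every interval $I\subset[-\tau,\tau]$ we have $m\,\ell(I)\le\phi(I)=\int_I\phi\le M_0\,\ell(I)$; one may take $c=\max\{M_0,1/m\}$. \emph{Part (3).} Since $\phi$ is even, $\phi([1-s,1])=\phi([-1,-1+s])$, and the substitution $r=t+1$ gives $\phi([-1,-1+s])=c\int_0^s e^{-1/r}\,dr=:c\,F(s)$, where $c$ is the normalising constant. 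An elementary estimate (l'Hôpital, differentiating $\epsilon^2e^{-1/\epsilon}$, or the substitution $r=1/v$) shows $F(\epsilon)\asymp\epsilon^2e^{-1/\epsilon}$ as $\epsilon\searrow0$, whence
\[
G_{C,\epsilon}=\frac{F(C\epsilon)}{F(\epsilon)}\gtrsim C^2\exp\!\Big(\tfrac1\epsilon\big(1-\tfrac1C\big)\Big)\longrightarrow\infty,\qquad\epsilon\searrow0,
\]
because $1-1/C>0$ for $C>1$.

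\emph{Part (2), reduction near $-1$.} Write $h:=\ell(I)$. The first step is to show that, for $h$ small (depending on $N,M$), the hypothesis forces $I$ to lie close to $-1$. Recall that $\phi$ is even, strictly increasing on $[-1,0]$ with $\phi(t)=c\,e^{-1/(t+1)}$ there, and strictly decreasing on $[0,1]$. Put $\tau_M:=\frac{\log M}{1+\log M}\in(0,1)$, so that the oscillation ratio $\max_{[-\tau_M,\tau_M]}\phi/\min_{[-\tau_M,\tau_M]}\phi=e^{\tau_M/(1-\tau_M)}$ equals $M$; arguing as in (1), $\phi(I+h)\le M\,\phi(I)$ whenever $I,I+h\subset[-\tau_M,\tau_M]$, which the hypothesis excludes. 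On the other hand, if $I$ met $(\tau_M,1)$ then for $h$ small $I\subset(0,1)$, and since $\phi$ decreases on $[0,1]$ and $I+h$ lies (up to truncation at $1$) to the right of $I$, we would get $\phi(I+h)\le\phi(I)$, again contrary to the hypothesis. Hence $I$ meets $(-1,-\tau_M)$, so $I\subset[-1,-\tau_M/2)$ once $h<\tau_M/2$; and then, for $h$ small, all translates $I+N'h$ with $1\le N'\le N+1$ still lie in $[-1,0)$, where $\phi(t)=c\,e^{-1/(t+1)}$.

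\emph{Part (2), the crux.} Write $I=[-1+a,-1+a+h]$ and set $G(u):=\int_u^{u+h}e^{-1/s}\,ds$, so that $\phi(I+N'h)=c\,G(a+N'h)$ for the relevant $N'$. The key is the two-sided bound, valid for every $u>0$,
\[
\exp\!\Big(\frac{h}{(u+h)(u+2h)}\Big)\;\le\;\frac{G(u+h)}{G(u)}\;\le\;\exp\!\Big(\frac{h}{u(u+h)}\Big),
\]
obtained by writing $G(u+h)=\int_u^{u+h}e^{-1/(s+h)}\,ds$ and comparing this integrand pointwise with $e^{-1/s}$: their ratio is $\exp(h/(s(s+h)))$, which for $s\in[u,u+h]$ lies between the two displayed exponentials. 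Applying the upper bound with $u=a$, the hypothesis yields $h/(a(a+h))>\log M$. Applying the lower bound with $u=a+N'h$, $1\le N'\le N$, and splitting cases finishes the argument: if $(N+2)h\le a$ then $(u+h)(u+2h)\le 4a(a+h)$, so the ratio is at least $\exp(h/(4a(a+h)))>M^{1/4}$; if $(N+2)h>a$ then $(u+h)(u+2h)<4(N+2)^2h^2$, so the ratio is at least $\exp(1/(4(N+2)^2h))>M^{1/8}$ as soon as $h\le 2/((N+2)^2\log M)$. In either case the ratio exceeds $M^{1/8}>M^{1/8}/2$, and one takes $l_{N,M}$ to be the minimum of the finitely many thresholds on $h$ imposed above.

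\emph{The main obstacle.} The delicate point is controlling $G(u)=\int_u^{u+h}e^{-1/s}\,ds$ as $u\searrow0$: in precisely the regime where the hypothesis of (2) can hold, the integrand oscillates over $[u,u+h]$ by a factor larger than $M$, so the naive comparison $G(u)\approx h\,e^{-1/u}$ is useless. The identity $G(u+h)=\int_u^{u+h}e^{-1/(s+h)}\,ds$, which turns the ratio $G(u+h)/G(u)$ into a pointwise comparison of two integrands over one common interval, is what circumvents this; everything else is bookkeeping to pin down $l_{N,M}$.
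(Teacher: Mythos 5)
Your proof is correct, and for part (2) it follows a genuinely different route from the paper's. Parts (1) and (3) are essentially the paper's arguments (the paper proves (3) by bounding $\phi([1-C\epsilon,1])$ from below via $\phi(1-D\epsilon)(C-D)\epsilon$ for an intermediate $1<D<C$ rather than via the asymptotic $\int_0^\epsilon e^{-1/r}\,dr\asymp\epsilon^2e^{-1/\epsilon}$, but the computations are equivalent). For (2), the paper first uses the hypothesis together with the mean value theorem applied to $t\mapsto t^{-1}$ to extract the quantitative localisation $x+1<(\ln M)^{-1/2}(2\ell(I))^{1/2}$, and then re-applies the mean value theorem to the translated intervals, comparing each $\phi$-integral with endpoint values of $\phi$ at the cost of a factor $2$ (which is where the paper's $M^{1/8}/2$ comes from). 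Your identity $G(u+h)=\int_u^{u+h}e^{-1/(s+h)}\,ds$, which converts the ratio of two integrals into a pointwise comparison of integrands over a single interval and yields the clean two-sided bound $\exp(h/((u+h)(u+2h)))\le G(u+h)/G(u)\le\exp(h/(u(u+h)))$, is a nicer mechanism: it avoids the endpoint comparisons entirely, gives the sharper conclusion $M^{1/8}$ (indeed $M^{1/4}$ in one of your cases), and replaces the square-root threshold juggling by the transparent dichotomy $a\lessgtr(N+2)h$. One small imprecision in your reduction step: your case analysis (``$I,I+h\subset[-\tau_M,\tau_M]$'' versus ``$I$ meets $(\tau_M,1)$'') does not literally cover an interval $I\subset[-\tau_M,\tau_M]$ whose translate $I+h$ protrudes past $\tau_M$; but such an $I$ lies in $(\tau_M-2h,\tau_M]\subset(0,1)$ once $2h<\tau_M$, so the monotonicity argument disposes of it, and the conclusion that $I$ must meet $(-1,-\tau_M)$ stands. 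This is a boundary case to patch, not a gap in the idea.
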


\begin{proof}
(1) Since $I \subset [-\tau,\tau]$, we have $\phi(\tau) \leq \varphi \leq \phi(0)$ on $I$. This gives
\begin{displaymath} \varphi(\tau)\ell(I) \leq \int_{I} \varphi \equiv \varphi(I) \leq \varphi(0)\ell(I). \end{displaymath}
Hence $\phi(I) \asymp_{\tau} \ell(I)$, as claimed.

(2) Fix an interval $I \subset [-1,1)$. Then, assuming that $I$ is half-open, we have $I = [x,x + \ell(I))$ for some $x \in [-1,1)$. Given $M$ and $N$, we may initially choose $l_{N,M}$ so small that
\begin{displaymath} \ell(I) \leq l_{N,M} \, \text{ and } \, I + (N + 1)\cdot \ell(I) \not\subset [-1,0] \quad \Longrightarrow \quad \frac{\varphi(I + \ell(I))}{\varphi(I)} \leq M. \end{displaymath}
Thus, for the rest of the proof, we may rest assured that $I + N' \cdot \ell(I) \subset [-1,0]$ for $0 \leq N' \leq N + 1$. Suppose that $\varphi(I + \ell(I))/\varphi(I) > M$. Since $\phi$ is increasing on $[-1,0]$, we have
\begin{displaymath} \frac{\phi(x + 2\cdot\ell(I))}{\phi(x)} \geq \frac{\phi(I + \ell(I))}{\phi(I)} > M. \end{displaymath}
Taking logarithms on both sides and invoking the definition of $\phi$ results in
\begin{displaymath} \frac{1}{x + 1} - \frac{1}{(x + 2 \cdot \ell(I)) + 1} = \frac{1}{-(x + 2\cdot\ell(I)) - 1} - \frac{1}{-x - 1} > \ln M. \end{displaymath}  
Applying the mean value theorem to $x \mapsto x^{-1}$ yields $\xi \in (x + 1,x + 2\cdot\ell(I) + 1)$ such that
\begin{displaymath} \frac{2\cdot\ell(I)}{\xi^{2}} = \frac{1}{x + 1} - \frac{1}{(x + 2\cdot\ell(I)) + 1} > \ln M, \end{displaymath}
which implies 
\begin{displaymath} 0 \leq \xi < (\ln M)^{-1/2} \cdot (2\cdot\ell(I))^{1/2}. \end{displaymath}
Now we are ready to estimate the ratio $\phi(I + (N' + 1)\cdot \ell(I))/\phi(I + N'\cdot \ell(I))$. The function $\phi$ is increasing on the intervals $I + N' \cdot \ell(I)$ and $I + (N' + 1) \cdot \ell(I)$, $1 \leq N' \leq N$, so we may estimate
\begin{align}\label{ineq} \frac{\phi(I + (N' + 1)\cdot \ell(I))}{\phi(I + N'\cdot \ell(I))} \geq \frac{\phi(x + (N'+1) \cdot \ell(I) + \ell(I)/2)}{2\phi(x + (N'+1) \cdot \ell(I))}. \end{align} 
Taking logarithms and applying the mean value theorem as above yields a point $\eta \in (x + (N'+1) \cdot \ell(I) + 1, x + (N'+1) \cdot \ell(I) + \ell(I)/2 + 1)$ such that
\begin{displaymath} \frac{\ell(I)/2}{\eta^{2}} = \ln\left(\frac{\phi(x + (N'+1) \cdot \ell(I) + \ell(I)/2)}{\phi(x + (N'+1) \cdot \ell(I))}\right). \end{displaymath}
Since $t^{1/2}$ tends to zero slower than $N \cdot t$, we may choose $l_{N,M}$ so small that $\ell(I) \leq l_{N,M}$ implies $(N'+1) \cdot \ell(I) + \ell(I)/2 \leq (\ln M)^{-1/2} \cdot (2 \cdot \ell(I))^{1/2}$ for all $1 \leq N' \leq N$. Using this and previously established bound for $\xi$ now yields 
\begin{align*} \eta & < x + (N'+1) \cdot \ell(I) + \ell(I)/2  + 1\\
& \leq \xi + (N'+1)\cdot \ell(I) + \ell(I)/2\\
& < 2 \cdot (\ln M)^{-1/2}\cdot(2\cdot\ell(I))^{1/2} \end{align*}
for intervals $I$ of length $\ell(I) \leq l_{N,M}$. The proof is finished by combining this with (\ref{ineq}) and the definition of $\eta$: 
\begin{displaymath} \frac{\phi(I + (N' + 1)\cdot \ell(I))}{\phi(I + N'\cdot \ell(I))} > \frac{1}{2}\cdot\exp\left(\frac{\ell(I)/2}{[2\cdot (\ln M)^{-1/2}) \cdot (2\cdot\ell(I))^{1/2}]^{2}}\right) = M^{1/8}/2 \end{displaymath}

(3) As $\phi$ is even, it is enough to show that $\varphi([1 - C\epsilon,1])/\varphi([1 - \epsilon,1]) \to \infty$ as $\epsilon \to 0$. Fix $\epsilon \in (0,1)$, and let $1 < D < C$. As $\varphi$ is decreasing on $[0,1]$, we have
$$\phi([1 - C\epsilon, 1]) \geq \phi([1-C\epsilon,1-D\epsilon]) \geq \phi(1 - D\epsilon)(C - D)\epsilon.$$

Combining this with the definition of $\varphi$ yields
\begin{align*} \ln G_{C,\epsilon} = \ln \frac{\phi([1-C\epsilon,1])}{\phi([1-\epsilon,1])} \geq \ln \frac{\varphi(1 - D\epsilon)(C - D)}{\varphi(1 - \epsilon)} = \ln(C - D) + \frac{D - 1}{D\epsilon}. \end{align*} 
The right hand side tends to $\infty$, so the proof is complete.
\end{proof}

The next lemma is the counterpart of Lemma \ref{phiprop} for the measure $\mu$. To state the result, we need two definitions.

\begin{definition}[Comparability] \label{compa} Let $C \geq 1$ and $0 < \lambda \leq 1$. We say that a pair $(A,B)$ of Borel sets $A,B \subset [-1,1)$ is $(C,\lambda)$-\emph{comparable}, if
$$C^{-1} \lambda \leq \frac{\mu(A)}{\mu(B)} \leq C\lambda.$$
\end{definition} 

\begin{definition}[Coverings and Packings] \label{coverpack} Let $k \in \N$. If $J$ is an interval and $J \subset I_k$ for some $I_k \in \cI_k$, then we write $\cF_{J} = \{I \in \cI_{k + 1} : I \cap J \neq \emptyset\}$ to be the minimal $\cI_{k + 1}$-\emph{cover} of $J$, and write $\cP_{J} = \{I \in \cI_{k + 1} : I \subset J\}$ to be the maximal $\cI_{k + 1}$-\emph{packing} of $J$. The reference to $k$ is suppressed from the notation, as the relevant generation will always be clear from the context.
\end{definition} 

\begin{remark}\label{dyadic}
If $\j \in \Sigma_{k}$ is fixed, $\cB$ is a collection of sets in $\cI_{\j}$ and $B = \bigcup \cB$, we extend the definition of pull-back by writing
\begin{displaymath} B^{\leftarrow} := \bigcup_{I \in \cB} I^{\leftarrow}. \end{displaymath} 
The formula
\begin{displaymath} \mu(B) = \varphi(B^{\leftarrow})\mu(I_{\j}) \end{displaymath}
then follows immediately from the definition of $\mu$.
\end{remark}

\begin{lemma}
\label{muprop}
The measure $\mu$ satisfies the following properties.
\begin{enumerate}
\item[(1)] For each $\tau \in (0,1)$ there exists a generation $k(\tau) \in \N$ and a constant $C(\tau) \geq 1$ with the following property. Let $k \geq k(\tau)$ and $I_{k} \in \cI_{k}$. If $J$ is any interval such that $J \subset I_{k}$, $\cP_J \neq \emptyset$ and
$$d(J,\partial I_k) \geq \tau\ell(I_k),$$
then $(J,I_{k})$ is $(C(\tau),\lambda)$-comparable, where $\lambda = \ell(J)/\ell(I_k)$.
\item[(2)] Let $C \geq 1$, $k \in \N$ and $I_{k} \in \cI_{k}$. Suppose that $J \subset I_k$ is an interval such that $\cP_J \neq \emptyset$ (that is, $J$ contains an interval from $\mathcal{I}_{k + 1}$), $5J \subset I_{k}$,
$$\ell(J)/\ell(I_k) < \min\left\{\frac{l_{3,6^8C^8}}{2},\frac{1}{20}\right\} \quad \text{and}\quad \mu(5J) \leq C\mu(J)$$
where $l_{3,6^8C^8}$ is the threshold from Lemma \ref{phiprop}\emph{(2)}, and $5J$ is the interval with the same midpoint as $J$ and length $\ell(5J) = 5\ell(J)$. Then all pairs of intervals in $\cF_J$ are $(D,1)$-comparable with $D = \max\{6^{25}C^{25},C(1/4)^{2}\}$ (here $C(1/4)$ is the constant from \emph{(1)} with $\tau = 1/4$).
\item[(3)] Let $C > 8$, $k \in \N$ and $I_{k} \in \cI_{k}$. Assume that $J \subset J^\star \subset I_k$ are intervals such that the packing $\cP_J \neq \emptyset$ (that is, $J$ contains an interval from $\mathcal{I}_{k + 1}$), 
$$\ell(J^\star) = C\ell(J) \quad \text{and}\quad \overline{J} \cap \partial I_k \neq \emptyset.$$ 
Then
$$\mu(J^\star)/\mu(J) \geq G_{C/8,4\lambda},$$
where $\lambda := \ell(J)/\ell(I_k)$ and $G_{C/8,4\lambda}$ is the ratio from Lemma \ref{phiprop}\emph{(3)}.
\end{enumerate}
\end{lemma}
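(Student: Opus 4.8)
The plan is to prove the three parts of Lemma \ref{muprop} by systematically translating statements about $\mu$ into statements about $\phi$-integrals over pull-back intervals, using the key formula $\mu(B) = \varphi(B^\leftarrow)\mu(I_\j)$ from Remark \ref{dyadic}, and then invoking the corresponding parts of Lemma \ref{phiprop}.

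For part (1), I would first note that since $J \subset I_k$ with $d(J,\partial I_k) \geq \tau \ell(I_k)$, the pull-back $J^\leftarrow$ lies in a compact subinterval $[-1+\tau', 1-\tau']$ of $(-1,1)$ for a suitable $\tau' = \tau'(\tau)$; here it matters that $\cP_J \neq \emptyset$ so $J^\leftarrow$ is well-defined as a union of dyadic pull-backs, and the distance condition controls how far from $\pm 1$ the pull-back can reach. Then Lemma \ref{phiprop}(1) gives $\varphi(J^\leftarrow) \asymp_\tau \ell(J^\leftarrow)$, and since pull-back is essentially a rescaling of $[-1,1)$ that multiplies lengths by $\ell(I_k)/\ell(I_0) = \ell(I_k)/2$ up to controlled dyadic-rounding errors coming from replacing $\cP_J$ by the actual interval $J$, we get $\varphi(J^\leftarrow) \asymp_\tau \ell(J)/\ell(I_k)$. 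Combining with $\mu(I_k) = \varphi(I_k^\leftarrow)\mu(I_{\mathbf{j}}) = \mu(I_k)$ trivially and applying the formula to both $J$ and $I_k$ yields the comparability; the generation threshold $k(\tau)$ enters to ensure the dyadic rounding is negligible relative to $\lambda$.

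For part (2), the idea is that the hypothesis $\mu(5J) \leq C\mu(J)$ together with $5J \subset I_k$ forces, via the formula, a bound $\varphi((5J)^\leftarrow) \leq C' \varphi(J^\leftarrow)$ on pull-backs; writing $I := J^\leftarrow$ (roughly an interval of length $\sim \lambda$), the intervals $I, I+\ell(I), \dots$ tile $(5J)^\leftarrow$ and the consecutive ratios $\varphi(I + (N'+1)\ell(I))/\varphi(I + N'\ell(I))$ are all controlled: if one of them were large, Lemma \ref{phiprop}(2) would propagate largeness to neighbors, contradicting the $C$-bound on the total. This is why the threshold $l_{3,6^8 C^8}$ appears — three steps suffice to cover a factor-$5$ neighborhood once $\ell(J)$ is small. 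I would then conclude that all the pieces of $\cF_J$ (which differ from the $\cP_{5J}$-pieces by boundedly many intervals) have comparable $\varphi$-mass, hence comparable $\mu$-mass, giving $(D,1)$-comparability; near $\partial I_k$ one instead falls back on part (1) with $\tau = 1/4$, which accounts for the $C(1/4)^2$ term in $D$.

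For part (3), since $\overline J \cap \partial I_k \neq \emptyset$, the pull-back $J^\leftarrow$ is an interval of length $\approx 4\lambda$ (the factor $4$ coming from $\ell(I_0)/2$ together with dyadic conventions) with one endpoint at $\pm 1$, and $(J^\star)^\leftarrow$ is the corresponding interval of length $\approx 4\lambda \cdot (C/8)$ — the $1/8$ absorbing rounding losses and the fact that $J^\star$ need not be perfectly aligned. Then $\mu(J^\star)/\mu(J) = \varphi((J^\star)^\leftarrow)/\varphi(J^\leftarrow)$ is precisely a ratio of the form appearing in Lemma \ref{phiprop}(3), namely $\geq G_{C/8,\,4\lambda}$, and we are done.

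The main obstacle I anticipate is part (2): one must carefully track the combinatorics of how the factor-$5$ dilate $5J$ decomposes into translates of (a set comparable to) $J^\leftarrow$ inside $[-1,0]$, verify that three translation steps genuinely cover it given the length restriction $\ell(J)/\ell(I_k) < 1/20$, and manage the bookkeeping of the exponents ($M = 6^8C^8$ feeding through $M^{1/8}/2$ iterated a bounded number of times to produce $6^{25}C^{25}$) while simultaneously handling the boundary case via part (1). The other two parts are essentially direct applications of Lemma \ref{phiprop} once the pull-back geometry is set up, with the only care needed being the uniform control of dyadic rounding errors, which is exactly what the generation thresholds are for.
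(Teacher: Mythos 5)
Your overall strategy---translating everything into $\phi$-integrals of pull-backs via Remark \ref{dyadic} and then invoking the corresponding parts of Lemma \ref{phiprop}---is exactly the paper's, and your treatments of parts (1) and (3) match the paper's proofs up to bookkeeping (in (3) the paper sandwiches $J \subset \bigcup\cF_J \subset \bigcup\cP_{J^\star} \subset J^\star$ and uses that $\overline{J}$ touches $\partial I_k$ to get $\ell(\bigcup\cF_J) \leq 2\ell(J)$, whence the lengths $4\lambda$ and $\tfrac{C}{8}\cdot 4\lambda$ of the pull-backs; this is the rounding you flagged). Your mechanism for part (2)---tiling a neighbourhood of $J$ by translates of $\bigcup\cP_J$ and using Lemma \ref{phiprop}(2) to show that one large consecutive ratio would propagate and contradict $\mu(5J) \leq C\mu(J)$---is also the paper's argument, including the role of $N=3$ and the exponent bookkeeping.

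However, your case division in part (2) is inverted, and as stated it would not go through. You propose to run the propagation argument generically and to ``fall back on part (1) with $\tau = 1/4$ near $\partial I_k$.'' Part (1) is inapplicable near $\partial I_k$: its hypothesis is $d(J,\partial I_k) \geq \tau\ell(I_k)$, and it works precisely because the pull-backs then avoid neighbourhoods of $\pm 1$, where $\phi$ degenerates. The paper's split is the opposite one: if $5J$ \emph{contains the midpoint} of $I_k$, then (since $\ell(5J) < \ell(I_k)/4$) one has $d(5J,\partial I_k) \geq \ell(I_k)/4$ and part (1) with $\tau = 1/4$ applies, giving the $C(1/4)^2$ term; if $5J$ does \emph{not} contain the midpoint, it lies in a single half of $I_k$, so all the pull-backs $P_N^{\leftarrow}$ of the translates $P_N = \bigcup\cP_J + N\ell(\bigcup\cP_J)$ lie in $[0,1]$ (or $[-1,0]$), where $\phi$ is monotone. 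That monotonicity is essential twice in the propagation argument: to order the values $\phi(P_N^{\leftarrow})$ so that $\mu(5J) \leq C\mu(J)$ yields $\phi(P_{-2}^{\leftarrow}) \leq 3C\phi(P_{-1}^{\leftarrow})$, and in the final step where $\mu(P_{-2}) \geq n\mu(I^l)$ and $\mu(P_2) \leq n\mu(I^r)$ for the leftmost and rightmost intervals $I^l, I^r \in \cF_J$. If $5J$ straddles the midpoint, the pull-backs straddle $0$, $\phi$ is not monotone there, and both steps fail---which is exactly why that case must be handled by part (1) instead. Swapping your two cases repairs the argument and recovers the paper's proof.
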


\begin{proof}
(1) Since $d(J,\partial I_k) \geq \tau\ell(I_k)$, every interval $I \in \cF_{J}$ has
$$d(I,\partial I_k) \geq \tfrac{\tau}{2}\ell(I_k),$$
provided that $\ell(I) \leq \tau\ell(I_{k})/2$, that is, choosing $k(\tau)$ large enough and assuming $k \geq k(\tau)$. This shows that $I^{\leftarrow} \subset [-1+\tau,1-\tau]$, whence Lemma \ref{phiprop}(1) yields 
$$\varphi(I^{\leftarrow}) \asymp_{\tau} \ell(I^{\leftarrow}) \asymp 2^{-k}.$$
As $\mu(I) = \phi(I^{\leftarrow})\mu(I_k)$, this proves that $(I,I_{k})$ is $(C,2^{-k})$-comparable for some constant $C = C(\tau) \geq 1$ depending only on $\tau$. If $J = I$, we are done. If $\card\cF_{J} > 1$, the proof is reduced to this by comparing with $\mu(I_{k})$ the $\mu$ measures of $\bigcup \cF_{J}$ and $\bigcup \cP_{J} \neq \emptyset$. 

(2) First assume that $5J$ contains the midpoint of $I_k$. As $5\ell(J) < \ell(I_k)/4$ by assumption, we then have
$$d(5J,\partial I_k) \geq \frac{\ell(I_k)}{4},$$
Let $I,I' \in \cF_{J}$. Then $I,I' \subset 5J$, so the inequality above yields $d(I,\partial I_{k}) > \ell(I_{k})/4$ and $d(I',\partial I_k) > \ell(I_k)/4$. The proof of part (1) now implies that $(I,I_{k})$ and $(I',I_{k})$ are $(C(1/4),2^{-k})$-comparable, which shows that $(I,I')$ is $(C(1/4)^{2},1)$-comparable. 

Next assume that $5J$ does not contain the midpoint of $I_k$. Then $5J$ is contained in either half of $I_k$, say the right one. Let $P_0 = \bigcup \cP_{J}$, and define $P_N := P_{0}+N\ell(P_{0})$ for $N \in \{-2,-1,0,1,2\}$. 

\begin{figure}[h]
\label{fig2}
\begin{center}
\includegraphics[scale = 0.5]{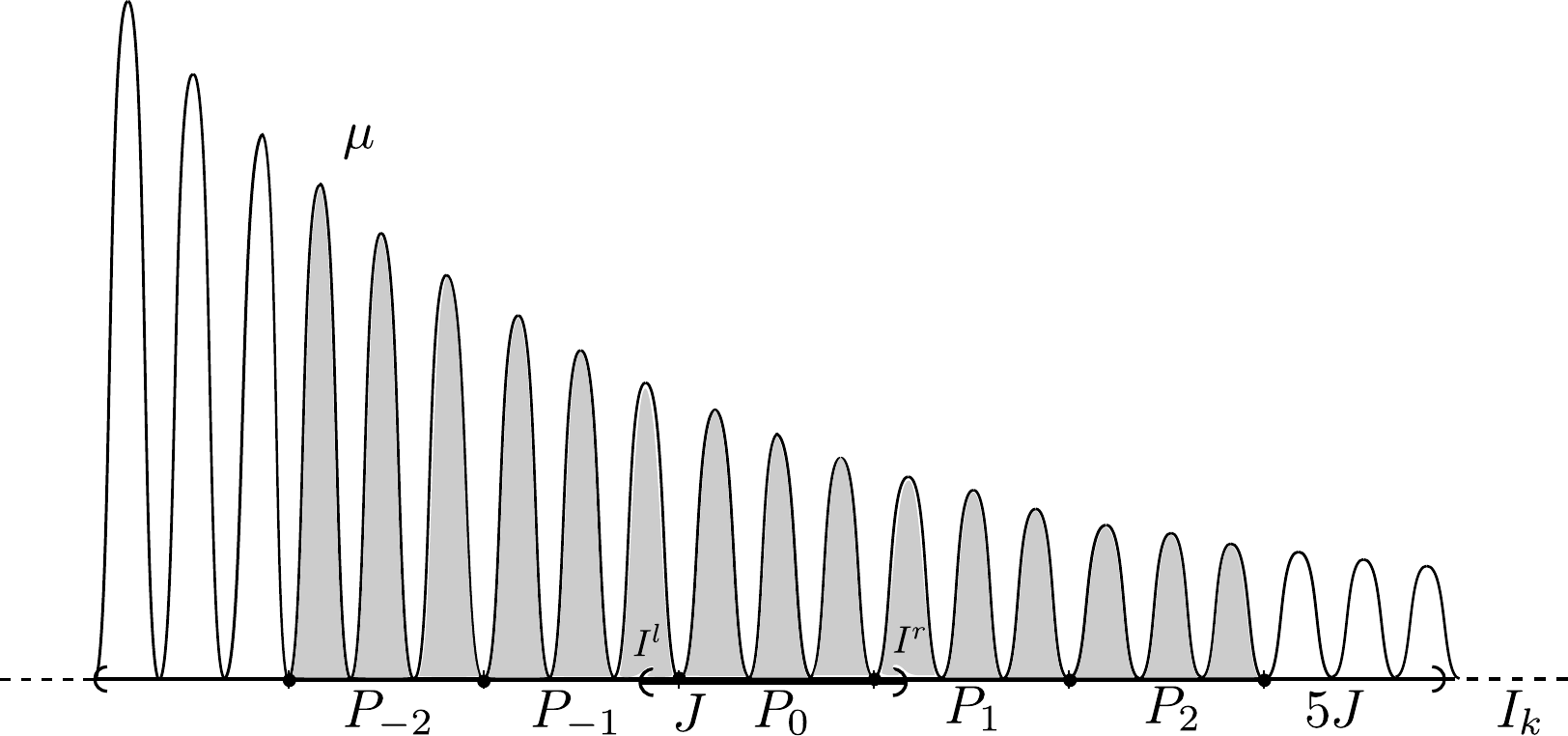}
\end{center}
\caption{The distribution of $\mu$ in the vicinity of the intervals $P_N$.}
\end{figure}

Since $\ell(P_N) \leq \ell(J)$, the intervals $P_{N}$ are contained in $5J \subset I_{k}$ for all $-2 \leq N \leq 2$, see Figure 2. As was pointed out in Remark \ref{dyadic}, we have
$$\mu(P_N) = \phi(P^{\leftarrow}_N) \mu(I_k),$$
Note that, by assumption,
\begin{equation}\label{claim}\ell(P^{\leftarrow}_N) = 2\ell(P^{\leftarrow}_{N})/\ell(I_{0}) = 2\ell(P_N)/\ell(I_k) \leq 2\ell(J)/\ell(I_k) < l_{3,6^8 C^8}.\end{equation}
Since $P_N \subset 5J$, and $5J$ is a subset of the right half of $I_k$, we see that $P^{\leftarrow}_N \subset [0,1]$ for $-2 \leq N \leq 2$. As $\phi$ is decreasing on $[0,1]$, this yields $\phi(P^{\leftarrow}_N) \geq \phi(P^{\leftarrow}_{N+1})$ for $-2 \leq N \leq 1$, see Figure 3. 

\begin{figure}[h]
\label{fig3}
\begin{center}
\includegraphics[scale = 0.5]{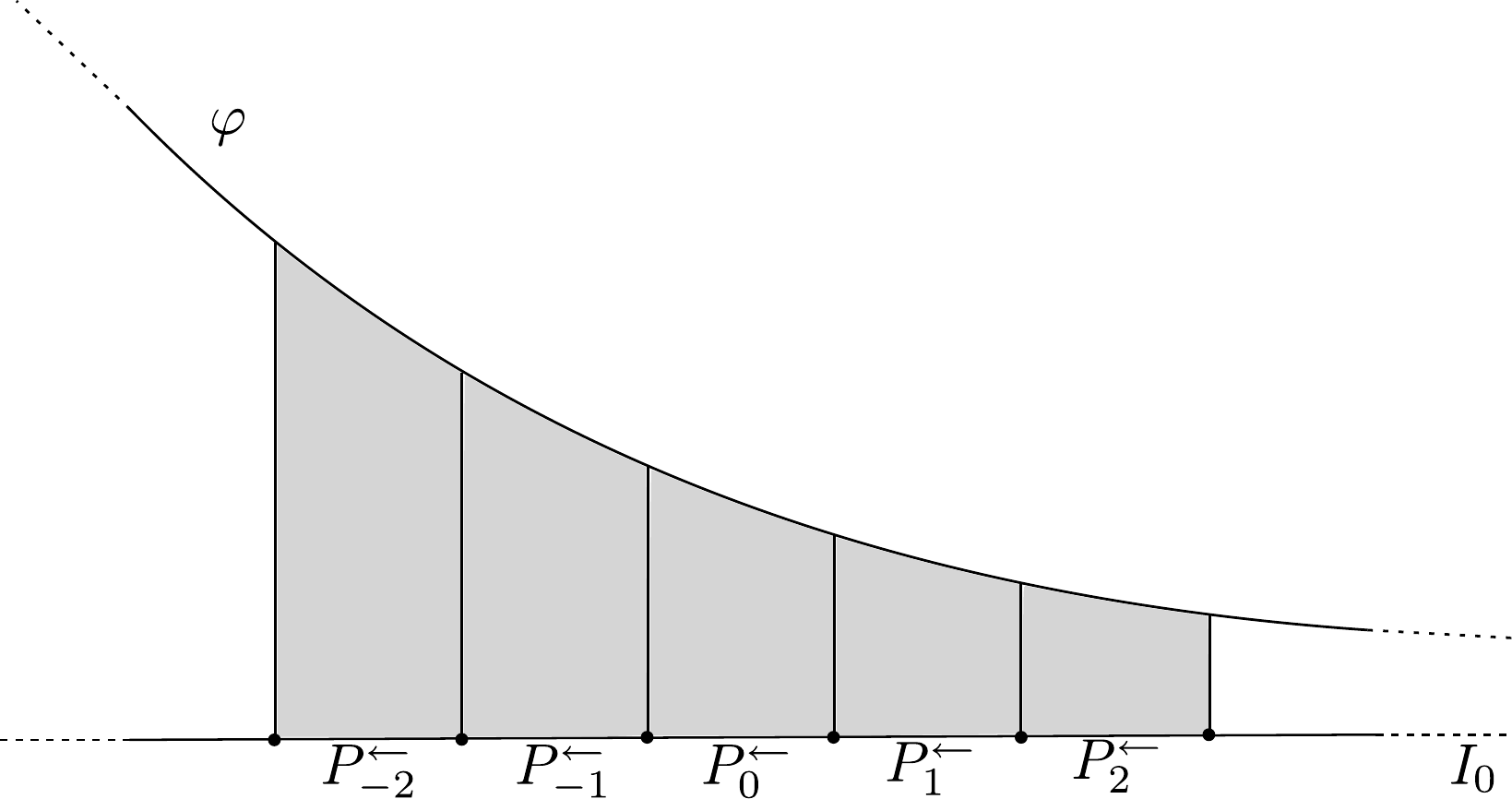}
\end{center}
\caption{The mapping $\phi$ is decreasing in the vicinity of the pull-back intervals $P_N^{\leftarrow}$.}
\end{figure}

Now
$$\mu(P_{-2}) \leq \mu(5J) \leq C\mu(J) \leq C(\mu(P_{-1}) + \mu(P_0) + \mu(P_1)),$$
so that dividing by $\mu(I_{k})$ yields
$$\phi(P^{\leftarrow}_{-2}) \leq C(\phi(P^{\leftarrow}_{-1}) + \phi(P^{\leftarrow}_0) + \phi(P^{\leftarrow}_1)) \leq 3C \phi(P^{\leftarrow}_{-1}).$$
Next we claim that $\phi(P^{\leftarrow}_{-2}) \leq 6^{25}C^{25}\phi(P^{\leftarrow}_{2})$. If not, then the estimate 
$$6^{24}C^{24} < \frac{\varphi(P^{\leftarrow}_{-1})}{\varphi(P^{\leftarrow}_{-2})} \cdot \frac{\phi(P^{\leftarrow}_{-2})}{\phi(P^{\leftarrow}_2)} = \frac{\phi(P^{\leftarrow}_{-1})}{\phi(P^{\leftarrow}_0)} \cdot \frac{\phi(P^{\leftarrow}_{0})}{\phi(P^{\leftarrow}_{1})} \cdot \frac{\phi(P^{\leftarrow}_{1})}{\phi(P^{\leftarrow}_2)}$$
shows that at least one of the three factors in the latter product must be strictly bigger than $6^8 C^8$. Recalling (\ref{claim}), Lemma \ref{phiprop}(2) then implies that $\phi(P^{\leftarrow}_{-2})/\phi(P^{\leftarrow}_{-1}) > 3C$, which is impossible. Hence $\phi(P^{\leftarrow}_{-2}) \leq 6^{25}C^{25}\phi(P^{\leftarrow}_{2})$, and consequently $\mu(P_{-2}) \leq 6^{25}C^{25}\mu(P_{2})$. Among all the intervals in $\cF_{J}$, let $I^{l}$ and $I^{r}$ be the leftmost and rightmost ones (respectively), or, in other words, the ones with the largest and the smallest $\mu$ measure. If $\card\cP_{J} = n$, we obtain
\begin{align}
\label{number} 
n\mu(I^{l}) \leq \mu(P_{-2}) \leq 6^{25}C^{25}\mu(P_{2}) \leq 6^{25}C^{25}n\mu(I^{r}), 
\end{align}
since $P_{-2}$ is the union of $n$ intervals in $\cI_{k+1}$ situated to the left of $I^{l}$ (thus having larger $\mu$ measure than $I^{l}$), and similarly $P_2$ is a union of $n$ intervals in $\cI_{k+1}$ situated to the right of $I^{r}$. Here we needed once more that $P_{N} \subset 5J$, and $5J$ lies on the right half of $I_{k}$. Dividing both sides of (\ref{number}) by $n$ finishes the proof.

(3) Let $P = \bigcup \cP_{J^\star}$ and $F = \bigcup \cF_{J}$. Since $C \geq 3$, we have $J \subset F \subset P \subset J^\star$, and
$$\ell(F) \leq 2\ell(J) = 2C^{-1}\ell(J^\star) \leq 4C^{-1}\ell(P).$$
Then, recalling that $\lambda = \ell(J)/\ell(I_{k})$, 
$$\ell(F^\leftarrow) = 2\ell(F)/\ell(I_k) \leq 4\lambda \quad \text{and} \quad \ell(P^\leftarrow) = 2\ell(P)/\ell(I_k) \geq \tfrac{C}{2} \cdot \lambda = \tfrac{C}{8} \cdot 4\lambda.$$
Since $\overline{J} \cap \partial I_k \neq \emptyset$, we have either $1 \in \overline{F^\leftarrow}$ or $-1 \in \overline{F^{\leftarrow}}$. In the former case
$$F^\leftarrow = [1-\ell(F^\leftarrow),1] \subset [1-4\lambda,1] \quad \text{and} \quad P^\leftarrow = [1-\ell(P^\leftarrow),1] \supset [1-\tfrac{C}{8} \cdot 4\lambda,1].$$
An application of Remark \ref{dyadic} and Lemma \ref{phiprop}(3) then gives
$$ \frac{\mu(J^\star)}{\mu(J)} \geq \frac{\mu(P)}{\mu(F)} = \frac{\phi(P^\leftarrow)}{\phi(F^\leftarrow)} \geq G_{C/8J,4\lambda}. $$
Since $\varphi$ is even, we reach the same conclusion if $-1 \in \overline{F^\leftarrow}$.
\end{proof}

\section{Non-doubling of $\mu$} \label{sec:5}

The non-doubling of $\mu$ can be deduced from the following

\begin{lemma}
\label{nondoubling}
For $\mu$ almost every $x \in \R$ there exists an increasing sequence $(k_{i})_{i \in \N}$ of integers such that $2^{-i}\ell(I) \leq d(x,\partial I) \leq 2^{-i + 1}\ell(I)$ whenever $x \in I \in \cI_{k_{i}}$.
\end{lemma}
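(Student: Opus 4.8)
The plan is to show that the set of "bad" points — those $x$ where the desired sequence $(k_i)$ fails to exist — has $\mu$-measure zero, by a Borel--Cantelli-type argument over generations. First I would fix $i \in \N$ and examine, for a construction interval $I_k \in \cI_k$, where the mass of $\mu$ restricted to $I_k$ concentrates relative to the dyadic subdivision: by Remark \ref{dyadic}, the weights $\mu(I_{k+1})/\mu(I_k)$ for the children $I_{k+1}$ of $I_k$ are exactly the $\phi$-integrals $\phi(I_{k+1}^{\leftarrow})$ over the pull-back intervals, which tile $[-1,1)$. Since $\phi$ is bounded above and below on any $[-\tau,\tau]$ by Lemma \ref{phiprop}(1) but blows down to $0$ near $\pm 1$, the children near the \emph{center} of $I_k$ carry comparable mass while those near $\partial I_k$ carry exponentially small mass. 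The key quantitative input is that, because $\phi$ decays like $\exp(1/(|t|-1))$, the conditional $\mu$-probability that a point lands within distance $2^{-i}\ell(I_k)$ of $\partial I_k$ is super-exponentially small in $i$ — I would extract this from Lemma \ref{phiprop}(3) (or directly from the $G_{C,\epsilon}$ estimate), getting a bound of the form $p_i := \mu(\{x \in I_k : d(x,\partial I_k) < 2^{-i}\ell(I_k)\})/\mu(I_k) \leq C\exp(-c/2^{-i}) = Ce^{-c2^i}$, uniformly over $I_k$, once $i$ is large enough that the relevant pull-back intervals sit in a fixed neighbourhood of $\pm 1$.

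Next I would turn this into a statement about the existence of the increasing sequence $(k_i)$. The natural device is a stopping-time / renewal construction: define $k_1$ to be the first generation, $k_2 > k_1$ the next, and so on, where at stage $i$ one looks at the generation-$k_i$ interval $I_{k_i}$ containing $x$ and asks whether $2^{-i}\ell(I_{k_i}) \leq d(x,\partial I_{k_i}) \leq 2^{-i+1}\ell(I_{k_i})$. The upper bound $d(x,\partial I)\le 2^{-i+1}\ell(I)$ is the "rare" event and by the previous paragraph has conditional probability $\le p_{i-1}$, summable in $i$; the lower bound $d(x,\partial I)\ge 2^{-i}\ell(I)$ is the "typical" event. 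The cleanest route is: for each fixed generation $k$ and each $I_k$, the distance $d(x,\partial I_k)/\ell(I_k)$ takes, as $x$ ranges over $I_k$, roughly dyadic values, and I claim that for $\mu$-a.e.\ $x$ the quantity $-\log_2\big(d(x,\partial I_{k(x)})/\ell(I_{k(x)})\big)$ tends to $+\infty$ along a subsequence of generations while visiting every sufficiently large value — i.e.\ it is unbounded but does not jump. Concretely: (a) by Borel--Cantelli with the $p_i$ bounds, for $\mu$-a.e.\ $x$ and all large $i$, at generation $k$ with $\ell(I_k)$ small one has $d(x,\partial I_k)\ge 2^{-i}\ell(I_k)$ fails only finitely often per scale, giving an upper envelope; (b) conversely the proximity ratio does get small infinitely often because each time $x$ sits in a child near $\partial I_k$ the ratio drops, and one shows (again via the $\phi$-weights) this happens along infinitely many generations for a.e.\ $x$. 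Interleaving (a) and (b) and using that passing from generation $k$ to $k+1$ changes $d(x,\partial I)/\ell(I)$ by a controlled multiplicative factor (at most roughly $2$, by the dyadic subdivision), one can always find, for each target value $2^{-i}$, a generation $k_i$ at which the ratio lies in $[2^{-i},2^{-i+1}]$.

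Let me restructure the heart of the argument, since (b) is where care is needed. Consider the random-interval chain $I_0 \supset I_{1}(x) \supset I_2(x) \supset \cdots$ under the probability measure $\mu$. At each step, conditionally on $I_k = I_k$, the child $I_{k+1}(x)$ is chosen with probability $\phi(I_{k+1}^{\leftarrow})$. The position-within-parent, call it $\theta_k(x) := d(x,\partial I_k(x))/\ell(I_k(x)) \in (0,1/2]$, is \emph{not} Markov, but what matters is only: (i) $\theta_{k+1}(x) \le 2\,\theta_k(x) + 2^{-(k-?)}$-type relations controlling upward jumps — actually the relevant bound is $\theta_{k+1} \le$ (something like) $2\theta_k$ from below and $\theta_{k+1}\ge \theta_k/2$-ish from dyadic geometry, so consecutive $\log_2(1/\theta_k)$ differ by $O(1)$; and (ii) $\E_\mu[\,\log(1/\theta_{k+1}) \mid I_k\,] \ge \log(1/\theta_k) - O(1)$ is false in general but $\log(1/\theta_k)$ is still \emph{recurrent to large values} because the probability of landing in the outermost child — forcing $\theta_{k+1} \le 2^{-k}$ roughly — is bounded below by $\phi([1-2^{-k-1},1]) \asymp \exp(-c2^{k})$... which is too small. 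So instead I would not force a single deep descent; rather, observe that the \emph{first} child from either end already has $\theta_{k+1}$ of order $2^{-(k+1)}\ell$, and more usefully, that over a block of generations the chain visits a child near the boundary with probability bounded below independent of the scale by a fixed constant (since $\phi$-mass of, say, the outer $1/10$ of $[-1,1]$ on each side is a fixed positive number $\eta_0>0$). Thus infinitely often (Borel--Cantelli, independence across the branching being automatic since children are chosen independently at each node) $x$ enters the outer tenth of its parent; once that happens one tracks $\theta_k$ as it decreases — doubling (at worst) each further generation deeper toward the boundary or jumping back up — and by the intermediate-value property of the $O(1)$-step sequence $\log_2(1/\theta_k)$, it must pass through the window $[i, i+1]$ for every target $i$ it exceeds. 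Combining the upper control of paragraph two (ratio rarely stays large) is what pins the window from above.

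\medskip

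The main obstacle I anticipate is exactly this recurrence/interleaving step: proving that for $\mu$-a.e.\ $x$ the normalized distance $d(x,\partial I)/\ell(I)$ not only gets arbitrarily small along generations (one direction, relatively easy from the positive $\phi$-mass near $\pm 1$) but does so \emph{through every dyadic window} $[2^{-i},2^{-i+1}]$, which requires the matching upper bound that it is not too often extremely small, and that is where the super-exponential decay estimate $p_i \le Ce^{-c2^i}$ and Borel--Cantelli enter. Assembling both halves into a single increasing sequence $(k_i)$ — choosing $k_1<k_2<\cdots$ greedily, at each stage taking the first generation past $k_{i-1}$ where the window condition for level $i$ holds — is then bookkeeping, valid off a $\mu$-null set obtained as a countable union of the exceptional sets from the two Borel--Cantelli applications.
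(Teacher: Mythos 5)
There is a genuine gap, and it sits exactly where you anticipated trouble. Your interleaving argument rests on the claim that passing from generation $k$ to generation $k+1$ changes the normalized distance $\theta_k(x)=d(x,\partial I_k)/\ell(I_k)$ by a bounded multiplicative factor, so that $\log_2(1/\theta_k)$ moves in $O(1)$ steps and must pass through every window $[i,i+1]$. That is false for this construction: a generation-$k$ interval $I_{\j}$, $\j\in\Sigma^k$, is subdivided into $2^{k+1}$ children of length $2^{-k-1}\ell(I_{\j})$, not into two halves. Consequently $\theta_{k+1}$ bears no bounded relation to $\theta_k$ (it can drop by a factor of order $2^{-k}$ in a single step), the ``intermediate-value property'' of $\log_2(1/\theta_k)$ fails, and the tracking step after $x$ enters the outer tenth of its parent does not go through. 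The super-exponential bound $p_i\leq Ce^{-c2^i}$ is correct but is not the estimate the lemma needs.

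The fix is a drastic simplification that you brush past in your third paragraph. For fixed $i$ and $k$ large (depending on $i$), the window set $B_{I,i}=\{x\in I: 2^{-i}\ell(I)\leq d(x,\partial I)\leq 2^{-i+1}\ell(I)\}$ is itself a union of generation-$(k+1)$ children of $I$, whose pull-backs tile the fixed set $\pm[1-2^{-i+2},1-2^{-i+1}]$; hence by Remark \ref{dyadic} its conditional $\mu$-measure equals $2\,\varphi([1-2^{-i+2},1-2^{-i+1}])>0$, a constant independent of $k$. You already observed that the children are chosen independently at each node, so the events $E_{k,i}=\bigcup_I B_{I,i}$ are independent in $k$, their measures sum to infinity, and the \emph{second} (divergence) Borel--Cantelli lemma gives $\mu(\limsup_k E_{k,i})=1$ for each $i$. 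Intersecting over $i\in\N$ and choosing $k_1<k_2<\cdots$ greedily finishes the proof. This is the paper's argument; no recurrence, stopping times, or upper-envelope control of $\theta_k$ is needed, because the target window itself (not merely the outer tenth) already recurs with constant positive probability at every large generation.
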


Indeed, assume that $x$ is one of the points satisfying the description above and choose the sequence $(k_{i})_{i \in \N}$ accordingly. If $x \in I \in \mathcal{I}_{k_{i}}$, set $r_{i} = d(x,\partial I)$, $J_{i} = B(x,r_{i})$ and write $J^\star_{i} = B(x,17r_{i}) \cap I$. Since $r_{i} \geq 2^{-i}\ell(I) \geq 2^{-k_{i}}\ell(I)$, the interval $J_{i}$ contains at least one interval in $\cI_{k_{i} + 1}$, and $\ell(J^\star_{i}) = 9\ell(J_{i})$. Since $J_{i} \subset I$, Lemma \ref{muprop}(3) now yields
\begin{displaymath} \frac{\mu(B(x,17r_{i}))}{\mu(B(x,r_{i}))} \geq \frac{\mu(J^\star_{i})}{\mu(J_{i})} \geq G_{9/8,4\lambda(i)}, \end{displaymath}
where $\lambda(i) = \ell(J_{i})/\ell(I) \leq 2^{-i + 2}$. Since $\lambda(i) \to 0$, as $i \to \infty$, we obtain
$$\limsup_{r \searrow 0} \frac{\mu(B(x,17r))}{\mu(B(x,r))} = \infty,$$
by Lemma \ref{phiprop}(3). This is equivalent to $D(\mu,x) = \infty$.

\begin{proof}[Proof of Lemma \ref{nondoubling}] For $i,k \in \N$, consider the sets
\begin{displaymath} E_{k,i} := \bigcup_{I \in \cI_{k}} \{x \in I : 2^{-i}\ell(I) \leq d(x,\partial I) \leq 2^{-i + 1}\ell(I)\} =: \bigcup_{I \in \cI_{k}} B_{I,{i}}. \end{displaymath}
If $k$ is large enough, depending on $i$, the sets $B_{I,{i}}$ can be expressed as the union of intervals in $\cI_{k + 1}$. Then $B_{I,{i}}^{\leftarrow}$ makes sense, and
\begin{displaymath} \mu(E_{k,i}) = \sum_{I \in \cI_{k}} \varphi(B_{I,{i}}^{\leftarrow})\mu(I) = 2\cdot \varphi([1 - 2^{-i + 2},1 - 2^{-i + 1}]) > 0. \end{displaymath}
In probabilistic terminology, the events $E_{k,i}$ are clearly $\mu$ independent for $k \in \N$ large enough. As $\sum_{k} \mu(E_{k,i}) = \infty$ for any $i \in \N$, the Borel--Cantelli lemma yields 
\begin{displaymath}  \mu\left(\limsup_{k \to \infty} E_{k,i}\right) = 1, \qquad i \in \N, \end{displaymath} 
which is precisely what we wanted.
\end{proof}

\section{Tangent measures of $\mu$} \label{sec:6}

We want to show that $\cL^1 \ll \nu \ll \cL^1$ for every tangent measure $\nu$ of $\mu$. Using \cite[Theorem 6.9]{ref4} this is derived from the following slightly stronger statement:

\begin{proposition}
\label{tgde}
Let $\nu$ be a tangent measure of $\mu$. Then there exists a discrete set $E \subset \R$ such that
\begin{equation}\label{density} 0 < \Theta_{\ast}^{1}(\nu,z) \leq \Theta^{\ast 1}(\nu,z) < \infty\end{equation} 
for every $z \in \R \setminus E$.
\end{proposition}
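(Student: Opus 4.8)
The plan is to fix a tangent measure $\nu \in \Tan(\mu,x)$, so that $\nu = \lim_i c_i T_{x,r_i\sharp}\mu$ for some $r_i \searrow 0$ and normalising constants $c_i$, and to reduce the density estimate (\ref{density}) to uniform-in-$i$ comparability statements for $\mu$ on the construction intervals, which is precisely what Lemma \ref{muprop} supplies. First I would restrict attention to a point $x$ in a full-measure set: by Lemma \ref{nondoubling} I may assume $x$ admits a sequence $(k_j)$ with $2^{-j}\ell(I) \le d(x,\partial I) \le 2^{-j+1}\ell(I)$ whenever $x \in I \in \cI_{k_j}$, and by Theorem \ref{nondoublingcharacterisation} (combined with non-doubling of $\mu$) I know a priori that $\nu$ is not $1$-flat and that $\sup_{\nu}\nu(B(0,R))/\nu(B(0,1)) = \infty$. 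The exceptional set $E$ in the statement should be read off from this: the natural candidate is that $E$ consists of the (at most countably many, in fact discrete) accumulation points of rescaled "construction boundaries" $T_{x,r_i}(\partial I)$ with $\ell(I)$ comparable to $r_i$ — these are the only places where $\phi$'s singular behaviour at $\pm 1$ can survive the limit.

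The core of the argument is a two-scale comparison. Given $z \in \R \setminus E$, choose a small radius $\rho$ and a large radius $R\rho$ (with $R$ fixed, say $R=5$), and transport the balls $B(z,\rho)$ and $B(z,R\rho)$ back under $T_{x,r_i}$ to intervals $J_i \subset 5J_i \subset \R$ of lengths $\asymp r_i\rho$, $\asymp R r_i \rho$. For $i$ large these sit inside a single construction interval $I_{k} \in \cI_k$ — here one must pick $k = k(i,\rho)$ as the generation for which $\ell(I_k) \asymp r_i$, and the hypothesis $z \notin E$ guarantees that, for all large $i$, the dilated copy of $5J_i$ stays at distance $\gtrsim \tau\ell(I_k)$ from $\partial I_k$, so that Lemma \ref{muprop}(1) applies and gives $(5J_i, J_i)$ and $(J_i, I_k)$ comparability with a constant depending only on this $\tau = \tau(z)$. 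Cascading down one or two more generations and invoking Lemma \ref{muprop}(2) yields that the $\mu$-measures of all the roughly-$\rho^{-1}$ many generation-$(k+\text{const})$ intervals filling $B(z,R\rho)^{\leftarrow}$ are mutually comparable with a $z$-dependent (but $i$-independent) constant. Summing, $\mu(B(z,R\rho)^{\leftarrow}) \asymp_z R \cdot \mu(B(z,\rho)^{\leftarrow})$, hence after applying $c_i T_{x,r_i\sharp}$ and letting $i \to \infty$,
\begin{displaymath}
\nu(B(z,R\rho)) \asymp_z R\,\nu(B(z,\rho)),
\end{displaymath}
uniformly for all small $\rho$. A standard doubling-type iteration of this estimate then produces constants $0 < c(z) \le C(z) < \infty$ with $c(z)\rho \le \nu(B(z,\rho)) \le C(z)\rho$ for all small $\rho$, which is (\ref{density}).

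I expect the main obstacle to be bookkeeping the three competing scales — the tangent radius $r_i$, the "window" radius $\rho$ at the point $z$, and the construction generation $2^{-k}$ — and in particular verifying that the thresholds in Lemma \ref{muprop}(2) (the $l_{3,6^8C^8}$ and $1/20$ conditions) can be met \emph{simultaneously for all large $i$} with a single choice of parameters depending only on $z$; this is where $z \notin E$ must be used quantitatively, to keep $5J_i$ away from $\partial I_k$ and to control $\mu(5J_i)/\mu(J_i)$. A secondary technical point is the passage from comparability of $\mu$-measures of pull-back intervals to the measures $\nu(B(z,\cdot))$: one needs weak convergence together with the fact that $\nu$ gives zero mass to the (countable) set of "bad" radii where $\partial B(z,\rho)$ has positive $\nu$-mass, which is automatic for all but countably many $\rho$. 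Finally, I would argue that $E$ is discrete rather than merely countable by noting that each point of $E$ arises from a pull-back boundary $\pm 1$ scaled by a definite amount, so accumulation points of $E$ would force $d(x,\partial I)/\ell(I) \to 0$ along the relevant generations, contradicting Lemma \ref{nondoubling}; this is also the step that feeds into the application of \cite[Theorem 6.9]{ref4} to upgrade the conclusion to $\cL^1 \ll \nu \ll \cL^1$ on all of $\R$.
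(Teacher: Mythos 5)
Your overall strategy---reducing \eqref{density} to uniform-in-$i$ comparability statements for $\mu$ on construction intervals via Lemma \ref{muprop}, with an exceptional set coming from rescaled endpoints of construction intervals---is the same as the paper's, but several load-bearing steps would fail as proposed. First, you posit a generation $k$ with $\ell(I_k)\asymp r_i$. No such generation need exist: consecutive generations have length ratio $2^{-(k+1)}$, so the scales are super-exponentially separated and $r_i$ can fall anywhere in the huge gap between $\ell(I_{K(i)+1})$ and $\ell(I_{K(i)})$. The paper copes with this by first replacing $\nu$ with a dilated tangent measure (Lemma \ref{technical}) so that $\rho_i=\ell(I_{K(i)})/r_i\to\infty$ while $I_{K(i)+1}\subset B(x,r_i)\subset B(x,5r_i)\subset I_{K(i)}$, and then splitting on whether $N_i=\card\cF_{B(x,r_i)}$ stays bounded; this dichotomy is also what makes $E$ finite (the endpoints inside $B(x,r_i)$ are $\gtrsim r_i/N_0$ separated) in one case and empty in the other, and your proposal has no substitute for it. Second, Lemma \ref{muprop}(1) with a fixed $\tau$ cannot be applied to the reference ball $B(x,r_i)$: by Lemma \ref{nondoubling}, for a.e.\ $x$ the center satisfies $d(x,\partial I)/\ell(I)\to 0$ along infinitely many generations---that is precisely the mechanism producing non-doubling---so one is forced into Lemma \ref{muprop}(2), whose hypothesis $\mu(5J)\le C\mu(J)$ is \emph{not} a consequence of $z\notin E$ but of the normalization $c_i=\mu(B(x,r_i))^{-1}$ together with weak convergence (Remark \ref{doublingremarkk}); you never identify a source for this bound, and for the small balls $B(y_i,\rho r_i)$ centred at $y_i=x+r_iz$ no such doubling bound is available a priori.

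Third, the endgame is incorrect: a two-scale estimate $\nu(B(z,R\rho))\le AR\,\nu(B(z,\rho))$ with a constant $A>1$ iterates only to $\nu(B(z,R^{-n}))\ge (AR)^{-n}\nu(B(z,1))$, which is compatible with $\Theta_{\ast}^{1}(\nu,z)=0$ and, dually, with $\Theta^{\ast 1}(\nu,z)=\infty$. What is needed is the single-scale statement that $(B(y_i,\delta r_i),B(x,r_i))$ is $(c_z,\delta)$-comparable with $c_z$ independent of $\delta$ (the paper's Condition \ref{suffdensity}), i.e.\ a comparison against the fixed reference ball rather than against $B(z,R\rho)$. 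Finally, your argument for the discreteness of $E$ reads Lemma \ref{nondoubling} backwards: that lemma \emph{asserts} that $d(x,\partial I)/\ell(I)\to 0$ along a subsequence of generations, so no contradiction is obtained; and the initial restriction to $\mu$-a.e.\ base point $x$ is not available, since the proposition concerns every tangent measure of $\mu$.
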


We will prove Proposition \ref{tgde} by contradiction. Lemma \ref{technical} shows that if Proposition \ref{tgde} were to fail for some tangent measure $\tilde \nu$ of $\mu$, then it would also fail for a specialized tangent measure $\nu$ with certain extra features. In Lemma \ref{tgderedu} we will see that such a measure $\nu$ cannot exist.

\begin{lemma}\label{technical} Let $x \in [-1,1)$, and let $(I_{k})_{k \in \N}$ be the unique sequence of construction intervals satisfying $x \in I_{k} \in \mathcal{I}_{k}$. Suppose that Proposition \ref{tgde} fails for some measure $\tilde{\nu} \in \Tan(\mu,x)$. Then there exists a sequence $(K(i))_{i \in \N}$ of positive integers, a sequence of radii $r_i \searrow 0$, and a measure $\nu \in \Tan(\mu,x)$ with the following properties:
\begin{itemize}
\item[(i)] Proposition \ref{tgde} fails for $\nu$ inside $U(0,1)$. In other words, the points for which \eqref{density} fails for $\nu$ have an accumulation point inside $U(0,1)$. 
\item[(ii)] $$\mu(B(x,r_i))^{-1} T_{x,r_{i}\sharp}\mu \to \nu,$$
\item[(iii)] 
\begin{displaymath} \rho_{i} := \frac{\ell(I_{K(i)})}{r_{i}} \nearrow \infty \quad \text{ as } i \to \infty. \end{displaymath}
\item[(iv)]
\begin{displaymath} I_{K(i) + 1} \subset B(x,r_{i}), \qquad i \in \N. \end{displaymath} 
\item[(v)]
\begin{displaymath} B(x,5r_{i}) \subset I_{K(i)}, \qquad i \in \N. \end{displaymath} 
\end{itemize}
\end{lemma}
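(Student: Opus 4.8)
The plan is to extract the measure $\nu$ in two stages. First, since Proposition~\ref{tgde} fails for $\tilde\nu \in \Tan(\mu,x)$, the set of "bad" points $z$ where \eqref{density} fails is not discrete, hence has an accumulation point $z_0 \in \R$. Pick $R$ large enough that $z_0 \in U(0,R)$, and let $S\colon y \mapsto (y - z_0)/R$ (or rather the affine map recentering at $z_0$ and rescaling by a suitable factor). By the standard functoriality of tangent measures — e.g. \cite[Lemma 14.5 and Remark 14.4(3)]{ref4}-type statements, or directly from the definition via a diagonal argument — the pushforward $S_{\sharp}\tilde\nu$ (suitably normalised) is again a tangent measure of $\mu$ at $x$, its bad set is the image of the bad set of $\tilde\nu$ under $S$, and now that bad set accumulates at a point inside $U(0,1)$. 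Rename this measure $\nu^{(1)}$; it satisfies (i).

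Second, I would arrange (ii)--(v) by a diagonal/relabelling argument on the sequence defining $\nu^{(1)}$. By definition $\nu^{(1)}$ is the weak limit of $c_j T_{x,s_j\sharp}\mu$ for some $s_j \searrow 0$, $c_j > 0$; by \cite[Theorem~2.12]{ref4} (or \cite[Theorem~2.5]{ref7}) we may replace the normalising constants $c_j$ by $\mu(B(x,s_j))^{-1}$ at the cost of multiplying $\nu^{(1)}$ by a positive constant, which changes neither the support nor the density bounds \eqref{density}, so we keep calling it $\nu$; this gives (ii). For each $j$, let $K(j)$ be the unique integer with $I_{K(j)+1} \subset B(x,s_j) \subsetneq I_{K(j)}$ — this exists because the $I_k$ form a nested sequence shrinking to $\{x\}$ (well, to a point; if $x$ is a right endpoint one takes the one-sided version, but $\mu$-a.e.\ $x$ is interior to every $I_k$, and in any case the construction intervals nest down to $x$). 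Then automatically $\rho_j := \ell(I_{K(j)})/s_j \geq 1$ and (iv) holds. The issue is that (iii) asks $\rho_j \to \infty$ and (v) asks $B(x,5s_j) \subset I_{K(j)}$, i.e.\ $s_j \leq \tfrac15 d(x,\partial I_{K(j)})$ roughly — neither is automatic for every $j$. To fix this, I pass to a subsequence: I claim that $\limsup_j \rho_j = \infty$. If instead $\rho_j \leq \Lambda$ for all large $j$, then $s_j$ and $\ell(I_{K(j)})$ are comparable for all large $j$, so $B(x,s_j)$ stays "macroscopically large" inside $I_{K(j)}$ and — here is where Lemma~\ref{muprop}(1) enters — one gets uniform comparability $\mu(B(x,s_j)) \asymp \mu(I_{K(j)})$ with a bounded ratio $\asymp s_j/\ell(I_{K(j)})$, from which $\nu^{(1)}$ would be forced to have uniformly bounded density from below and above near $0$ — in fact one could run the argument of Lemma~\ref{tgderedu} in this easier regime to contradict (i). (Alternatively and more cleanly: one observes directly that the case $\rho_j$ bounded cannot produce a failure of \eqref{density}, which is precisely the content one will re-derive; to avoid circularity one instead simply notes that if $\rho_j$ were bounded along the whole sequence then $\nu$ would be a blow-up "at bounded scale", and a separate short argument — or the hypothesis that Proposition~\ref{tgde} is being contradicted — lets us discard it.) Granting $\limsup\rho_j = \infty$, pass to a subsequence making $\rho_j \nearrow \infty$, giving (iii). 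Finally, along this subsequence, $s_j/\ell(I_{K(j)}) = 1/\rho_j \to 0$, so eventually $5s_j < \tfrac15\ell(I_{K(j)})$; but we still need $B(x,5s_j)\subset I_{K(j)}$, i.e.\ that $x$ is not too close to $\partial I_{K(j)}$ relative to $s_j$. This is where one invokes Lemma~\ref{nondoubling}-type control: for $\mu$-a.e.\ $x$ — and the statement of Lemma~\ref{technical} is only needed $\mu$-a.e., since $\Tan(\mu,x)\neq\emptyset$ and the construction is $\mu$-a.e.\ — the quantity $d(x,\partial I_k)$ is comparable to $\ell(I_k)$ along a subsequence of generations (or at worst one reindexes $K(j)$ down by a bounded amount to land on a generation where $d(x,\partial I_{K(j)}) \gtrsim \ell(I_{K(j)}) \gg s_j$), yielding (v). After this final passage to a subsequence, relabel $r_i := s_{j_i}$, $K(i) := K(j_i)$, and all of (i)--(v) hold.

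\medskip

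\noindent\textbf{The main obstacle} I anticipate is making (iii), (iv) and (v) simultaneously compatible: (iv) forces $B(x,r_i)$ to be at least as large as the next generation interval $I_{K(i)+1}$, (v) forces it to be at most (a fifth of) the distance from $x$ to the boundary of the current interval $I_{K(i)}$, and (iii) forces the scale separation $r_i \ll \ell(I_{K(i)})$ to be genuinely growing. The tension is that a generic blow-up radius $r_i$ need not respect the combinatorial scales $\ell(I_k)$ at all, and near the endpoints of construction intervals $d(x,\partial I_k)$ can be far smaller than $\ell(I_k)$. The resolution is the freedom to pass to subsequences of scales combined with the freedom to shift $K(i)$ by a bounded number of generations; the one genuinely substantive input is ruling out the "bounded $\rho$" scenario, for which Lemma~\ref{muprop}(1) (giving $\mu(B(x,r)) \asymp (r/\ell(I_{K}))\,\mu(I_K)$ in the well-inside regime) shows such a blow-up is a constant multiple of $\cL^1$ near $0$ and therefore cannot witness the failure of \eqref{density} with an accumulation point of bad points inside $U(0,1)$ — contradicting (i), which we have already secured in stage one.
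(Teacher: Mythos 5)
Your first stage (recentring and rescaling $\tilde\nu$ so that an accumulation point of the bad set lands in $U(0,1)$, then renormalising the defining sequence so that $c_i = \mu(B(x,r_i))^{-1}$) matches the paper. The gaps are in the second stage. First, the bounded-$\rho$ case cannot be discarded the way you propose. With $\tilde K(i)$ the largest $k$ such that $I_k \not\subset B(x,\tilde r_i)$, the case $\tilde\rho := \sup_i \ell(I_{\tilde K(i)})/\tilde r_i < \infty$ is a genuine possibility, and your contradiction argument is circular: showing that a ``bounded-scale'' blow-up satisfies \eqref{density} off a discrete set is essentially the content of Lemma \ref{tgderedu}, whose hypotheses (iii)--(v) are exactly what you are trying to establish; no ``separate short argument'' is supplied, and since $B(x,\tilde r_i)$ may then reach close to $\partial I_{\tilde K(i)}$, where the density of $\mu$ is wildly non-uniform, Lemma \ref{muprop}(1) does not apply. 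The paper handles this case constructively rather than by contradiction: it sets $K(i) := \tilde K(i) - 1$ and takes $R \geq \tilde\rho$, exploiting the structural fact that consecutive construction intervals satisfy $\ell(I_{k-1})/\ell(I_k) = 2^{k+1} \to \infty$, so that dropping one generation automatically forces $\rho_i \to \infty$ while $\ell(I_{K(i)+1}) = \ell(I_{\tilde K(i)}) \leq \tilde\rho\,\tilde r_i \leq r_i$ preserves (iv). Your proposal never uses this super-exponential subdivision, which is precisely what makes the dichotomy resolvable.

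Second, (v) is not proved. The appeal to ``$d(x,\partial I_k) \asymp \ell(I_k)$ for $\mu$-a.e.\ $x$ along a subsequence of generations'' fails on three counts: the lemma must hold for \emph{every} $x$ carrying a bad tangent measure, not $\mu$-a.e.\ $x$, since Proposition \ref{tgde} and Theorem \ref{main} concern all tangent measures of $\mu$; Lemma \ref{nondoubling} asserts the opposite behaviour, namely $d(x,\partial I) \asymp 2^{-i}\ell(I)$, i.e.\ that $x$ hugs the boundary along those generations; and ``reindexing $K(i)$ down by a bounded amount'' cannot work, because $x$ may lie within $\epsilon\,\ell(I_k)$ of $\partial I_k$ for arbitrarily long runs of consecutive $k$, while an unbounded reindexing would destroy (iv). The paper's proof of (v) is a different, substantive argument: if $B(x,5r_i) \not\subset I_{K(i)}$ while $\rho_i \geq 100$, then the common boundary point $b$ of $I_{K(i)}$ and its neighbour lies in $B(x,5r_i)$, and Lemma \ref{muprop}(3) applied at $b$ gives $\mu(B(x,59r_i))/\mu(B(x,r_i)) \geq \tfrac12 G_{9/8,4\lambda(i)} \to \infty$, contradicting the uniform bound $C_{59}$ from \eqref{doubling}; hence this configuration occurs only finitely often and a final passage to a subsequence yields (v). You need both of these inputs; as written, the proposal does not close either step.
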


\begin{remark}
\label{doublingremarkk}
We will invoke the following property of tangent measures several times throughout the proof. Let $x \in [-1,1)$ and let $\nu = \lim c_i T_{x,r_i\sharp}\mu$ be any tangent measure of $\mu$ at $x$. Fix $R > 0$. Then by \cite[Theorem 1.24]{ref4} we have
$$\limsup_{i \to \infty} c_{i} \mu(B(x,Rr_{i})) \leq \nu(B(0,R)) < \infty.$$
This implies that there exists a finite constant $C_R \geq 1$ such that
\begin{align}
\label{doubling}
\sup_{i \in \N} c_i \mu(B(x,Rr_i)) \leq C_R.
\end{align}
The constant $C_{R}$ depends on $\nu$, of course, but we suppress this from the notation. 
\end{remark}

\begin{proof}[Proof of Lemma \ref{technical}] Since $\tilde{\nu}$ is a tangent measure of $\mu$, every measure of the form $cT_{0,R\sharp}\tilde{\nu}$ with $c > 0$ and $R > 0$ is also a tangent measure of $\mu$. The measure $\nu$ will be of this form. Since Proposition \ref{tgde} fails for $\tilde{\nu}$, the set of points $E$ where \eqref{density} fails has an accumulation point $z \in U(0,R_{1})$ for $R_{1} = |z| + 1$. Then, for any $c > 0$ and $R \geq R_{1}$, the set of points such that \eqref{density} fails for $cT_{0,R\sharp}\tilde{\nu}$ has an accumulation point inside $U(0,1)$. Thus, (i) is satisfied for any measure of the form 
\begin{equation}\label{2SAT} cT_{0,R\sharp}\tilde{\nu} \in \Tan(\mu,x) \end{equation} 
where $c >0$ and $R \geq R_{1}$.

Let $(\tilde{c}_{i})_{i \in \N}$ and $(\tilde{r}_{i})_{i \in \N}$ be sequences of numbers such that $\tilde{\nu} = \lim \tilde{c}_{i} T_{x,\tilde{r}_{i}\sharp}\mu$. According to \cite[Remark 14.4(1)]{ref4}, there exists $R_{2} \geq 1$ such that for any $R \geq R_{2}$, the sequence $(\tilde{c}_{i})_{i \in \N}$ can be chosen (perhaps after passage to a subsequence, which may depend on $R$) to be of the form
$$\tilde{c}_{i} = c_{R}\mu(B(x,R\tilde{r}_{i}))^{-1}$$ 
for some $c_{R} > 0$. This implies that for any $R \geq R_{2}$ we have
\begin{equation}\label{aux} \mu(B(x,R\tilde r_i))^{-1} T_{0,R\tilde r_{i}\sharp}\mu \to c_R^{-1}T_{0,R\sharp}\tilde\nu. \end{equation}
Now let $I_{k}$, $k \in \N$, be the unique construction interval satisfying $x \in I_{k} \in \cI_{k}$. For $\tilde{r}_{i} < 1$, the set of indices $k$ such that $I_{k} \not\subset B(x,\tilde{r}_{i})$ is non-empty and finite. Define $\tilde{K}(i)$ be the largest such index and write
\begin{displaymath} \tilde{\rho}_{i} := \frac{\ell(I_{\tilde{K}(i)})}{\tilde{r}_{i}} \geq 1. \end{displaymath}
Depending on the behavior of $\tilde{\rho}_i$, we now construct $K(i)$ and $r_i$ such that (i), (ii), (iii) and (iv) are simultaneously satisfied. The property (v) will essentially be a corollary of (ii), (iii) and (iv). We have two possibilities:
\begin{center} 
 (a) \quad $\sup_{i} \tilde{\rho}_{i} = \infty$ \qquad or \qquad (b) \quad $\tilde{\rho} := \sup_{i} \tilde{\rho}_{i} < \infty.$
\end{center}

\textit{Suppose that \emph{(a)} holds}. Let $R = \max\{R_1,R_2\}$, $K(i) := \tilde{K}(i)$, $r_i = R\tilde r_i$ and set $\nu = c_R^{-1}T_{0,R\sharp}\tilde \nu$. The measure $\nu$ is of the form indicated in \eqref{2SAT} and $R \geq R_1$, so it satisfies  condition (i). Since $R \geq R_2$ the property \eqref{aux} holds, so (after passage to a subsequence)
\begin{displaymath} \mu(B(x,r_{i}))^{-1}T_{x,r_{i}\sharp}\mu \to c_{R}^{-1}T_{0,R\sharp}\tilde{\nu} = \nu. \end{displaymath}
Hence $\nu$ satisfies (ii). Since $\sup \rho_i = \infty$, one more passage to a subsequence gives (iii). Finally (iv) holds, since $I_{K(i) + 1} = I_{\tilde{K}(i) + 1} \subset B(x,\tilde{r}_{i}) \subset B(x,r_{i})$.

\textit{Suppose that \emph{(b)} holds}. Set $R = \max\{R_{1},R_{2},\tilde{\rho}\}$, $K(i) := \tilde{K}(i) - 1$, $r_{i} := R\tilde{r}_{i}$ and define $\nu = c_R^{-1}T_{0,R\sharp}\tilde \nu$. Properties (i) and (ii) hold by the same proofs as in case (a). 
Moreover,
$$\rho_{i} = \frac{\ell(I_{K(i)})}{r_{i}} = \frac{\ell(I_{\tilde K(i)-1})}{R \tilde r_{i}} = 2^{\tilde K(i)}\cdot \frac{\ell(I_{\tilde K(i)})}{R \tilde r_{i}} \geq 2^{\tilde K(i)}/R \longrightarrow \infty$$
as $i \to \infty$, so that passing to a subsequence gives (iii). Finally (iv) follows from the inequalities
\begin{displaymath} \ell(I_{K(i) + 1}) = \ell(I_{\tilde{K}(i)}) \leq \tilde{\rho}\tilde{r}_{i} \leq r_{i}. \end{displaymath}

Having now fixed all our parameters, we complete the proof by showing that (v) holds. Suppose that for some $i \in \N$ we have 
\begin{align}
\label{inclusion}
B(x,5r_{i}) \not\subset I_{K(i)} =: I_{K} \quad \text{and} \quad \rho_i = \frac{\ell(I_{K(i)})}{r_{i}} \geq 100.
\end{align}
Then $B(x,5r_{i})$ intersects exactly one interval in $\mathcal{I}_{K(i)}$ besides $I_{K}$, say $I_{K}' = I_{K} + \ell(I_{K})$ (the other option being $I_{K} - \ell(I_{K})$). The common boundary point $b$ of $I_{K}$ and $I_{K}'$, see Figure 4, lies in $B(x,5r_{i})$, whence
$$B(x,r_{i}) \subset B(b,6r_{i}) \quad \mbox{and} \quad B(x,59r_{i}) \supset B(b,54r_{i}),$$
which, applying \eqref{doubling} with $R = 59$ and $\nu = \lim \mu(B(x,r_{i}))^{-1}T_{x,r_{i}\sharp}\mu$, yields
$$C_{59} \geq \frac{\mu(B(x,59r_{i}))}{\mu(B(x,r_{i}))} \geq \frac{\mu(B(b,54r_{i}))}{\mu(B(b,6r_{i}))}.$$

\begin{figure}[h]
\label{fig4}
\begin{center}
\includegraphics[scale = 0.5]{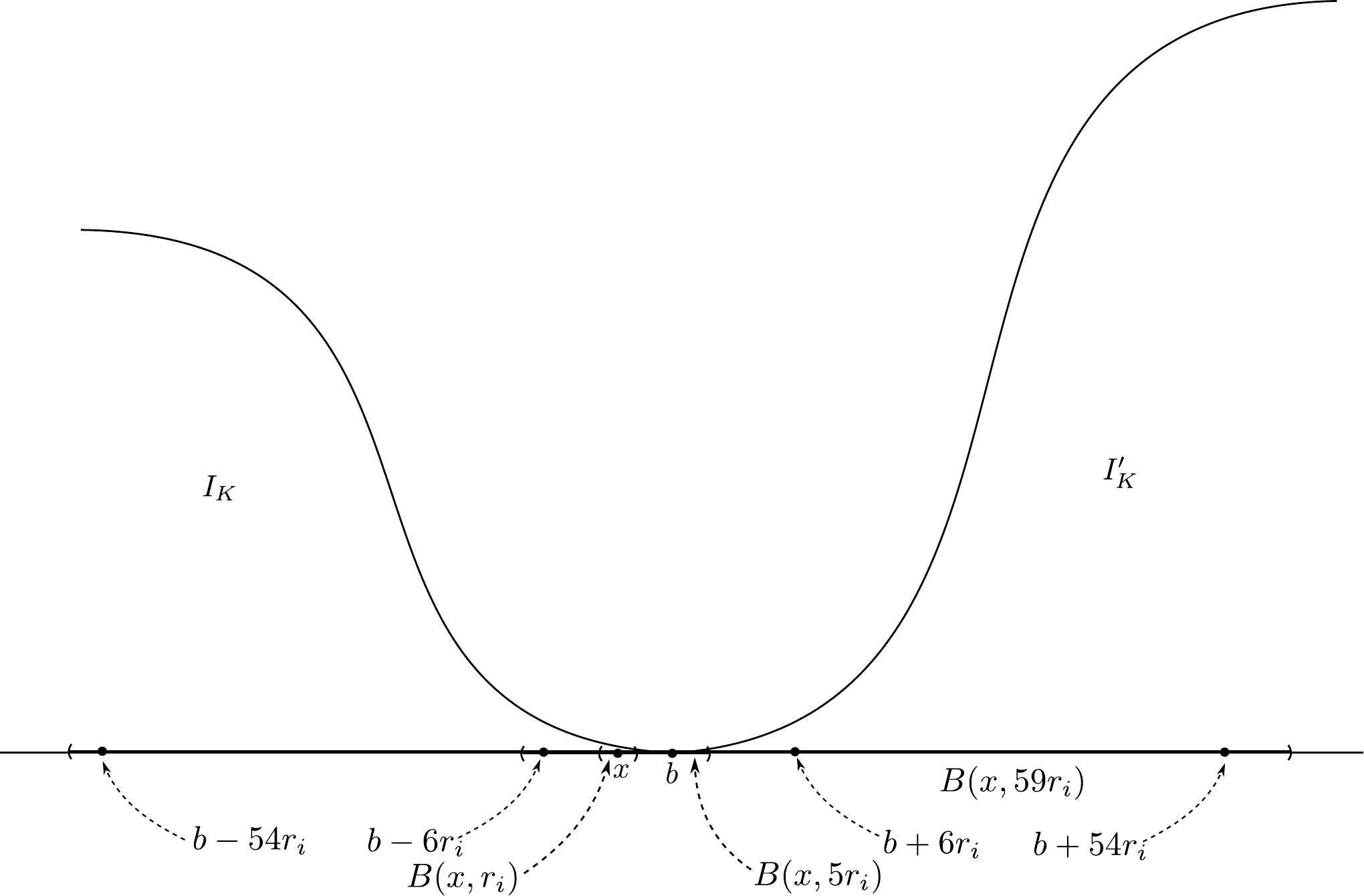}
\end{center}
\caption{The distribution of $\mu$ near the common boundary point $b$ of $I_K$ and $I_K'$.}
\end{figure}

By symmetry, we may assume that $\mu([b,b+6r_{i}]) \geq \mu([b-6r_{i},b])$. Write $J = [b,b + 6r_{i}]$ and $J^\star = [b,b + 54r_{i}]$. Then the assumptions of Lemma \ref{muprop}(3) are satisfied. In particular, $\mathcal{P}_{J} \neq \emptyset$, since $I_{K(i) + 1} \subset B(x,r_{i})$, which means that the length of the generation $K(i) + 1$ intervals is at most $2r_{i}$, and so at least one such interval is contained in $[b,b + 6r_{i}]$. Hence
$$\frac{\mu(B(b,54r_{i}))}{\mu(B(b,6r_{i}))} \geq \frac{\mu([b,b+54r_{i}])}{2\mu([b,b+6r_{i}])} \geq \tfrac{1}{2}G_{9/8, 4\lambda(i)},$$
where $\lambda(i) := \ell(J)/\ell(I_{K}) = 6r_{i}/\ell(I_K) = 6/\rho_{i}$. Hence $G_{9/8,4\lambda(i)} \leq 2C_{59}$. As $\lambda(i) \searrow 0$, Lemma \ref{phiprop}(3) now shows that \eqref{inclusion} can only hold for finitely many $i \in \N$. This, after one last passage to a subsequence, finishes the proof of (v) and the whole lemma.
\end{proof}

The following Lemma shows that the measure $\nu$ as in Lemma \ref{technical} cannot exist, and thus proves Proposition \ref{tgde}.

\begin{lemma} 
\label{tgderedu}
Suppose that $(K(i))_{i \in \N}$, $(r_i)_{i \in \N}$ and $\nu \in \Tan(\mu,x)$ satisfy the properties \emph{(ii)}, \emph{(iii)}, \emph{(iv)} and \emph{(v)} of Lemma \ref{technical}. Then there exists a finite set $E \subset U(0,1)$ such that \eqref{density} is satisfied for every $z \in U(0,1) \setminus E$. In other words, $\nu$ does not satisfy property \emph{(i)} of Lemma \ref{technical}.
\end{lemma}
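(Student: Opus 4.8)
The strategy is to use the structural properties (ii)--(v) to show that, on bounded regions away from finitely many bad points, the measure $\nu$ is comparable to Lebesgue measure. The point is that (v) places $B(x,5r_i)$ strictly inside a single construction interval $I_{K(i)}$, while (iii) says the rescaling ratio $\rho_i = \ell(I_{K(i)})/r_i$ tends to infinity. So after rescaling by $r_i$, the interval $I_{K(i)}$ becomes enormous (of length $\rho_i \to \infty$) and the ball $B(0,R)$ that carries the mass of $\nu$ sits deep in its interior, at distance $\gtrsim \rho_i - R$ from $\partial I_{K(i)}$, which is eventually $\geq \tfrac14 \ell(I_{K(i)})$ once $\rho_i \geq 4R+4$, say. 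At that point Lemma \ref{muprop}(1) with $\tau = 1/4$ applies: any dyadic subinterval $J$ of $I_{K(i)}$ meeting $B(x,Rr_i)$ and containing an interval of $\cI_{K(i)+1}$ is $(C(1/4),\lambda)$-comparable to $I_{K(i)}$ with $\lambda = \ell(J)/\ell(I_{K(i)})$; equivalently, $\mu(J)/\ell(J) \asymp_{C(1/4)} \mu(I_{K(i)})/\ell(I_{K(i)})$. By (iv) the generation $K(i)+1$ intervals have length at most $2r_i$, so after rescaling by $r_i$ they have length $\leq 2$, i.e.\ they are arbitrarily fine relative to $B(0,R)$ as $i\to\infty$ (their rescaled length being $\ell(I_{K(i)+1})/r_i \leq 2^{-1}\cdot 2 \cdot \rho_i^{-1}\cdot \text{(bounded)}$ — actually $\ell(I_{K(i)+1})/r_i = \tfrac{1}{2}\ell(I_{K(i)})/r_i = \rho_i/2$, so one must instead use generation $K(i)+m$ for $m$ chosen so that $\ell(I_{K(i)+m})/r_i \to 0$; the comparability of Lemma \ref{muprop}(1) still holds at generation $K(i)+m$ provided one stays at distance $\geq \tfrac14\ell(I_{K(i)})$ from $\partial I_{K(i)}$, which the proof of (1) in fact delivers at every finer generation too).

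Concretely, I would fix $R > 1$ and work inside $B(0,R)$. For each large $i$ pick the generation $k_i := K(i)+m_i$, where $m_i$ is the least integer with $2^{-m_i}\ell(I_{K(i)}) \leq r_i$, i.e.\ the generation whose intervals have rescaled length in $[1/2,1)$ — no, more carefully, choose $m_i$ so that the rescaled length $2^{-m_i}\rho_i$ lies in, say, $[1,2)$; these exist since $\rho_i\to\infty$. The rescaled generation-$k_i$ intervals then form a partition of (a neighbourhood of) $B(0,R)$ into intervals of comparable length $\asymp 1$, each at distance $\gtrsim \rho_i \asymp$ (large) $\geq \tfrac14\ell(I_{K(i)})$ from $\partial I_{K(i)}$, hence each $(C(1/4),\cdot)$-comparable to $I_{K(i)}$ by Lemma \ref{muprop}(1). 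Rescaling the measure identity $\mu(J) = \lambda\cdot(\text{ratio in }[C^{-1},C])\cdot\mu(I_{K(i)})$ and using (ii), namely $\mu(B(x,r_i))^{-1}T_{x,r_i\sharp}\mu \to \nu$ together with $\mu(B(x,r_i)) \asymp \mu(I_{K(i)})\cdot r_i/\ell(I_{K(i)})$ (again from Lemma \ref{muprop}(1), since $B(x,r_i)$ is a ball of the right position/size inside $I_{K(i)}$), gives that each rescaled generation-$k_i$ interval $\tilde J$ satisfies $C^{-1}\ell(\tilde J) \leq c_i\mu(r_i\tilde J + x) \leq C\ell(\tilde J)$ uniformly, where $c_i = \mu(B(x,r_i))^{-1}$. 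Passing to the weak limit, $\nu$ restricted to $B(0,R)$ satisfies $C^{-1}\ell(\tilde J) \leq \nu(\tilde J) \leq C\ell(\tilde J)$ for all dyadic intervals $\tilde J \subset B(0,R)$ of a fixed scale, with $C = C(1/4)$ a uniform constant — hence $C^{-1}\cL^1\lfloor B(0,R) \leq \nu\lfloor B(0,R) \leq C\cL^1\lfloor B(0,R)$ by a standard covering/Vitali argument. In particular \eqref{density} holds at \emph{every} $z \in U(0,1)$, so the exceptional set $E$ can be taken empty, which is even stronger than the conclusion stated; this contradicts property (i) of Lemma \ref{technical}, proving Proposition \ref{tgde}.

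One subtlety must be handled: the weak limit converts $\leq$ into $\leq$ only for open sets and $\geq$ into $\geq$ only for compact sets, so I would phrase the interval estimates using slightly shrunk closed sub-intervals for the lower bound and slightly enlarged open ones for the upper bound, with comparison constants that degrade by an arbitrarily small factor — harmless since the eventual constant is allowed to be any fixed multiple of $C(1/4)$. A second subtlety: Lemma \ref{muprop}(1) requires $\cP_J \neq \emptyset$, i.e.\ that the interval $J$ in question contains an interval of the next generation; with the choice of $k_i$ above the relevant intervals are themselves generation-$k_i$ construction intervals or short unions of them, which automatically contain generation-$(k_i+1)$ intervals, so this holds. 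The genuinely essential inputs are (v) (to land inside one $I_{K(i)}$), (iii) (to make that interval huge so the $\tau = 1/4$ interior condition is met), and (ii) (to identify the limit as $\nu$); (iv) enters only to guarantee $\cP$-nonemptiness at the working generation.

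\textbf{Main obstacle.} The one point that needs care is the passage from "uniform two-sided comparability of $c_i\mu$ with $\cL^1$ on a fixed grid of generation-$k_i$ intervals" to "two-sided comparability of the weak limit $\nu$ with $\cL^1$ on all of $B(0,R)$". The grids change with $i$ (both the generation $k_i\to\infty$ and the positioning shift with $r_i$), so one cannot directly test $\nu$ against a single fixed family of intervals; instead one shows that for \emph{any} fixed interval $(a,b)\subset U(0,R)$ and any $\delta>0$, for all large $i$ the interval $(a,b)$ is sandwiched between unions of generation-$k_i$ intervals whose total rescaled length is within $\delta$ of $b-a$, so the uniform bounds pass to the limit with an extra $(1\pm\delta)$ factor; letting $\delta\to 0$ finishes it. This is routine but is the place where the argument could go wrong if the grids were not asymptotically fine, which is exactly what (iii) guarantees.
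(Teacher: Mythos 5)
There is a genuine gap, and it sits at the very centre of your argument: the claim that for large $i$ the ball $B(x,Rr_i)$ lies at distance at least $\tfrac14\ell(I_{K(i)})$ from $\partial I_{K(i)}$, so that Lemma \ref{muprop}(1) with $\tau=1/4$ applies. Property (v) only gives $B(x,5r_i)\subset I_{K(i)}$, i.e.\ $d(x,\partial I_{K(i)})\gtrsim r_i$; since $r_i/\ell(I_{K(i)})=1/\rho_i\to 0$ by (iii), this distance is a \emph{vanishing} fraction of $\ell(I_{K(i)})$. You have conflated ``$B(x,Rr_i)$ is small relative to $I_{K(i)}$'' with ``$B(x,Rr_i)$ is deep inside $I_{K(i)}$''; the quantity $\rho_i-R$ you invoke measures the former, not the latter, and there is no fixed $\tau>0$ for which the hypothesis $d(J,\partial I_k)\geq\tau\ell(I_k)$ of Lemma \ref{muprop}(1) is guaranteed. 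The case your argument misses --- $x$ sitting at distance only $O(r_i)$ from $\partial I_{K(i)}$ --- is exactly the regime the construction is built to exploit: there the pull-backs of the relevant generation-$(K(i)+1)$ intervals lie near $\pm1$, where $\phi$ is wildly non-uniform, and adjacent construction intervals need not have comparable measure at all. The correct treatment of this case is Lemma \ref{muprop}(2), whose hypothesis $\mu(5J)\leq C\mu(J)$ must be supplied by the a priori tangent-measure bound \eqref{doubling} of Remark \ref{doublingremarkk}; your proof never uses \eqref{doubling}, which is a strong signal that an essential ingredient is missing. The same unjustified appeal to Lemma \ref{muprop}(1) underlies your claim $\mu(B(x,r_i))\asymp \mu(I_{K(i)})\,r_i/\ell(I_{K(i)})$.

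Two further problems follow from this. First, your conclusion that $E$ may be taken empty and $\nu$ is globally comparable to $\cL^1$ on $B(0,R)$ overreaches: when the generation-$(K(i)+1)$ intervals have length comparable to $r_i$ (the bounded-$N_i$ case in the paper), their endpoints persist in the blow-up limit as points where the next-generation mass distribution degenerates (pull-backs near $\pm1$ again), and these are precisely the points collected into the finite exceptional set $E$ in the paper's argument; one cannot expect two-sided density bounds there. Second, your plan to pass to a finer generation $k_i=K(i)+m_i$ with rescaled mesh $\asymp1$ does not escape the difficulty: the distribution of mass among the children of a generation-$(K(i)+1)$ interval is again governed by $\phi$ over all of $[-1,1)$, so near the endpoints of those intervals the finer grid cells are not mutually comparable either. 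By contrast, the ``main obstacle'' you flag --- passing comparability through the weak limit --- is the routine part and is handled by the standard inequalities for weak convergence (Condition \ref{suffdensity} in the paper). In short, the architecture needed is: the doubling bound \eqref{doubling} feeding Lemma \ref{muprop}(2) to compare intervals of $\cF_{J^i}$, a case split on $N_i=\card\cF_{J^i}$, and an exceptional set built from points approximating endpoints of construction intervals; none of these appears in your proposal.
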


The rest of the text is devoted to proving Lemma \ref{tgderedu}.

\begin{notation}Let $r_i \searrow 0$ and $x \in I_0$. If $i \in \N$, we write 
$$J^{i} := B(x,r_i),$$
and if $z \in U(0,1)$ and $\delta > 0$, we write 
$$J^{i}_\delta := B(y_i,\delta r_i),$$ 
where $y_i := x+r_i z$. Note that $J^i_\delta \subset J^i$ when $\delta < 1-|z|$.
\end{notation}

A sufficient condition for a tangent measure $\nu = \lim \mu(B(x,r_{i}))^{-1}T_{x,r_{i}\sharp}\mu$ to satisfy (\ref{density}) at a point $z \in \R$ is the following 
\begin{condition}\label{suffdensity} There exist constants $c_z > 0$ and $\delta_z > 0$ with the following property: for any $0 < \delta < \delta_z$ there exist infinitely many indices $i$ such that $(J^i_\delta,J^i)$ is $(c_z,\delta)$-comparable (recall Definition \ref{compa}). \end{condition}

Indeed, we have the inequalities
$$\limsup_{i \to \infty} \frac{T_{x,r_i\sharp} \mu(B)}{\mu(B(x,r_{i}))} \leq \nu(B) \quad \text{and} \quad \nu(U) \leq \liminf_{i \to \infty}  \frac{T_{x,r_i\sharp} \mu(U)}{\mu(B(x,r_{i}))}$$
for any compact set $B \subset \R^d$ and open set $U \subset \R^{d}$, see \cite[Theorem 1.24]{ref4}. Hence, for every $0 < \delta < \delta_z/2$, the $(c_z,\delta)$-comparability of the pairs $(B(z,\delta r_i),B(x,r_i))$ and the $(c_z,2\delta)$-comparability of the pairs $(B(z,2\delta r_i),B(x,r_i))$ for infinitely many $i$ gives
 $$c_z^{-1} \delta \leq \nu(B(z,\delta)) \leq \nu(U(z,2\delta)) \leq 2c_z \delta.$$ 
This implies
$$1/(2c_z) \leq \liminf_{\delta \searrow 0} \frac{\nu(B(z,\delta))}{2\delta} \leq \limsup_{\delta \searrow 0} \frac{\nu(B(z,\delta))}{2\delta} \leq c_z,$$
which is (\ref{density}). We will now prove Lemma \ref{tgderedu} by constructing a finite set $E \subset U(0,1)$ such that Condition \ref{suffdensity} is met for every point $z \in U(0,1) \setminus E$. We  split the proof of this into two cases -- Lemma \ref{part1} and Lemma \ref{part2} -- depending on the boundedness of the values
$$N_i := \card\cF_{J^{i}}, \quad i \in \N,$$
where $J^{i} = B(x,r_{i})$ and $\cF_{J^{i}}$ is defined using intervals in $\cI_{K(i) + 1}$, recall Definition \ref{coverpack}.

\begin{lemma}
\label{part1}
Suppose that $(K(i))_{i \in \N}$, $(r_i)_{i \in \N}$ and $\nu \in \Tan(\mu,x)$ satisfy the properties \emph{(ii)}, \emph{(iii)}, \emph{(iv)} and \emph{(v)} of Lemma \ref{technical}, and $\sup_{i} N_i = N_0 \in \N$. Then there exists a finite set $E \subset U(0,1)$ such that Condition \ref{suffdensity} is met for every $z \in U(0,1) \setminus E$.
\end{lemma}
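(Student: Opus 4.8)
Here is how I would attack Lemma \ref{part1}.

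\medskip

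\textbf{Overall strategy.} The plan is to show that under the standing hypotheses, the measure $\nu$ behaves like (a constant multiple of) Lebesgue measure on $U(0,1)$ except possibly at the finitely many images, under the limiting rescaling, of the endpoints of the construction intervals in $\cI_{K(i)+1}$ that survive into $B(x,r_i)$. Since $N_i \le N_0$, the ball $J^i=B(x,r_i)$ is covered by at most $N_0$ intervals of $\cI_{K(i)+1}$; passing to a subsequence we may assume $N_i\equiv n$ for some fixed $n\le N_0$ and that the combinatorial picture of $J^i$ relative to these $n$ intervals stabilises. By (iii) and (iv), $\rho_i\to\infty$ while $I_{K(i)+1}\subset B(x,r_i)$ means $\ell(I_{K(i)+1})=2^{-1}\ell(I_{K(i)})/(\text{something})\le 2r_i$; more precisely $r_i/\ell(I_{K(i)+1})\to\infty$ is \emph{false} in general — rather the $n$ generation-$(K(i)+1)$ intervals meeting $J^i$ each have length comparable to $r_i$. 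This is the good regime: each such interval has length $\asymp r_i$, so its endpoints, rescaled by $T_{x,r_i}$, converge (along the subsequence) to a finite set of points in $[-1,1]$; let $E\subset U(0,1)$ be the set of interior limit points so obtained, so $\card E\le n+1\le N_0+1$.

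\medskip

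\textbf{Verifying Condition \ref{suffdensity} off $E$.} Fix $z\in U(0,1)\setminus E$. Because $z\notin E$ and $E$ is the set of limiting rescaled endpoints, there is $\delta_z>0$ and a cofinite set of indices $i$ for which the ball $J^i_\delta=B(x+r_iz,\delta r_i)$, for every $\delta<\delta_z$, is well inside a single generation-$(K(i)+1)$ interval $I^{(i)}\in\cF_{J^i}$, and moreover $d(J^i_\delta,\partial I^{(i)})\ge\tau\ell(I^{(i)})$ for a fixed $\tau=\tau(z)>0$ (shrink $\delta_z$ if necessary). Since $\ell(I^{(i)})\asymp r_i$ and $\ell(J^i_\delta)=2\delta r_i$, we have $\ell(J^i_\delta)/\ell(I^{(i)})\asymp\delta$, and once $i$ is large enough $\cP_{J^i_\delta}\ne\emptyset$ (here we use (iii): $\rho_i\to\infty$ forces $\ell(I_{K(i)+1})/\ell(I_{K(i)})\to0$, i.e.\ generation $K(i)+1$ is finer than the scale $r_i$ by an unbounded factor, so $J^i_\delta$ of radius $\delta r_i$ contains a whole interval of $\cI_{K(i)+1}$ — wait, that is generation $K(i)+1$ itself; what we actually need is that $J^i_\delta$ contains an interval of the \emph{next} generation relative to $I^{(i)}$, i.e.\ of $\cI_{K(i)+2}$, which holds because $\ell(J^i_\delta)\asymp\delta r_i\gg\ell(I_{K(i)+2})$ again by (iii)). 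Then Lemma \ref{muprop}(1), applied with this $\tau$ at generation $K(i)+1$, gives that $(J^i_\delta,I^{(i)})$ is $(C(\tau),\ell(J^i_\delta)/\ell(I^{(i)}))$-comparable for all large $i$.

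\medskip

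\textbf{From $I^{(i)}$ to $J^i$.} It remains to compare $\mu(I^{(i)})$ with $\mu(J^i)=\mu(B(x,r_i))$. Here we invoke Lemma \ref{muprop}(2): by (v) we have $B(x,5r_i)\subset I_{K(i)}$, so $5J^i\subset I_{K(i)}$; the length ratio $\ell(J^i)/\ell(I_{K(i)})=r_i/\ell(I_{K(i)})=1/\rho_i\to0$ by (iii), so it eventually falls below the threshold $\min\{l_{3,6^8D^8}/2,1/20\}$; and the doubling bound $\mu(5J^i)\le C\mu(J^i)$ holds with a \emph{uniform} $C$ because of Remark \ref{doublingremarkk} (inequality (\ref{doubling}) with $R=5$ and $R=1$: $c_i\mu(B(x,5r_i))\le C_5$ while $c_i\mu(B(x,r_i))\to\nu(B(0,1))>0$, so the ratio is bounded). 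Therefore Lemma \ref{muprop}(2) yields that all pairs of intervals in $\cF_{J^i}$ are $(D,1)$-comparable with a fixed $D$; since $J^i$ is a union of parts of these $\le n$ comparable intervals and contains at least one of them in full (this needs a small argument: $J^i\supset I_{K(i)+1}$ by (iv) — actually (iv) gives exactly that $J^i$ contains at least one interval of $\cI_{K(i)+1}$), we get $\mu(J^i)\asymp_{n,D}\mu(I^{(i)})$. Chaining this with the previous paragraph, $(J^i_\delta,J^i)$ is $(c_z,\ell(J^i_\delta)/\ell(I^{(i)}))$-comparable, and since $\ell(J^i_\delta)/\ell(I^{(i)})\asymp\delta$ we may absorb the implied constant to conclude $(c_z',\delta)$-comparability for infinitely many $i$, which is Condition \ref{suffdensity}.

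\medskip

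\textbf{Main obstacle.} The delicate point is the bookkeeping of scales: one must be careful that ``generation $K(i)+1$'' plays two roles — it is fine enough relative to $\ell(I_{K(i)})$ (so that $5J^i$ is tiny inside $I_{K(i)}$, feeding Lemma \ref{muprop}(2)) yet coarse enough relative to $r_i$ that only boundedly many ($\le N_0$) of its intervals meet $J^i$, while \emph{its next refinement} $\cI_{K(i)+2}$ is in turn fine relative to $\delta r_i$ (so that $\cP_{J^i_\delta}\ne\emptyset$, feeding Lemma \ref{muprop}(1)). All three facts are consequences of $\rho_i\to\infty$ together with $N_i$ bounded, but writing the inequalities cleanly — and checking that after finitely many passages to subsequences the set $E$ of bad points is genuinely finite and contained in the open ball — is where the real work lies. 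The uniformity of the doubling constant $C$ in Lemma \ref{muprop}(2), which comes from Remark \ref{doublingremarkk} rather than from anything about $\mu$ itself, is the conceptual crux.
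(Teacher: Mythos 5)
Your proposal is correct and follows essentially the same route as the paper: the exceptional set $E$ consists of limiting rescaled endpoints of the $\leq N_0$ generation-$(K(i)+1)$ intervals meeting $J^i$ (finite because these intervals have length $\gtrsim r_i/N_0$), Lemma \ref{muprop}(2) with the uniform doubling bound from Remark \ref{doublingremarkk} gives $(D,1)$-comparability within $\cF_{J^i}$, and Lemma \ref{muprop}(1) handles $(J^i_\delta, I^{(i)})$ before chaining to $(J^i_\delta, J^i)$. The only cosmetic difference is that the paper defines $E$ via $\liminf_j d(y_{i_j},E_{i_j})/r_{i_j}=0$ along one extracted subsequence rather than via convergence of the rescaled endpoints, but the two constructions are equivalent.
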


\begin{proof}
Let $E_{i}$, $i \in \N$, be the set of all end-points of generation $K(i)+1$ construction intervals that are contained in $J^i$. In other words,
$$E_{i} := \bigcup_{I \in \cP_{J^{i}}} \partial I,$$
where the packing $\cP_{J^i}$ is defined using intervals in $\cI_{K(i)+1}$. Since $\card \cF_{J^i} \leq N_{0}$, we have $\card E_{i} \leq N_{0} + 1$. The required set $E \subset U(0,1)$ will be formed by the points $z \in U(0,1)$ such that $y_{i}$ visits infinitely often an arbitrarily small neighbourhood of the set $E_{i}$. 

We will now show that either $\liminf_i d(y_{i},E_{i})/r_{i} > 0$ for all $z \in U(0,1)$ or there exists a subsequence of indices $(i_{j})_{j \in \N}$ such that the set
\begin{equation} \label{setE} E := \Big\{z \in U(0,1) : \liminf_{j \to \infty} d(y_{i_{j}},E_{i_{j}})/r_{i_{j}} = 0\Big\}\end{equation}
is finite. Assume for the moment that there exists at least one point $z \in U(0,1)$ such that $\liminf_{i} d(y_{i},E_{i})/r_{i} = 0$, where recall that $y_i = x+r_i z$. Then we may extract a subsequence of indices $({i_{j}})_{j \in \N}$ such that $\lim_j d(y_{i_{j}},E_{i_{j}})/r_{i_{j}} = 0$. Let $E$ be the set constructed using this subsequence. We now need to show that $E$ is finite. Since $\lim_j d(y_{i_{j}},E_{i_{j}})/r_{i_{j}} = 0$, for every $j \in \N$ we may find $e_{i_{j}} \in E_{i_{j}}$ such that 
$$\lim_{j\to\infty} d(y_{i_{j}},e_{i_{j}})/r_{i_{j}} = 0.$$ 
Next suppose that $\tilde{z} \in E \setminus \{z\}$ and write $\tilde{y}_{i_{j}} = x + r_{i_{j}}\tilde{z}$. If $\liminf_j d(\tilde{y}_{i_{j}},e_{i_{j}})/r_{i_{j}} = 0$, the inequality
\begin{displaymath} d(z,\tilde{z}) = \frac{d(y_{i_{j}},\tilde{y}_{i_{j}})}{r_{i_{j}}} \leq \frac{d(y_{i_{j}},e_{i_{j}}) + d(\tilde{y}_{i_{j}},e_{i_{j}})}{r_{i_{j}}}, \qquad j \in \N,\end{displaymath}
would immediately force $\tilde{z} = z$, which shows that $\liminf_j d(\tilde{y}_{i_{j}},e_{i_{j}})/r_{i_{j}} > 0$. On the other hand, we have $\liminf_j d(\tilde{y}_{i_{j}},E_{i_{j}})/r_{i_{j}} = 0$ by definition of $\tilde{z} \in E$, which ensures that for infinitely many $j$ there exist points $\tilde{e}_{i_{j}} \in E_{i_{j}}$ such that $d(\tilde{y}_{i_{j}},\tilde{e}_{i_{j}})/r_{i_{j}} \to 0$. Then $\tilde e_{i_{j}} \neq e_{i_{j}}$ provided that $j$ is large enough. Otherwise $\tilde{z} = z$, as noted above. 

Now $N_{0}$ steps in: since $\card E_{i} \leq N_{0} + 1$ and $E_{i}$ is formed of boundary points of all generation $K(i)+1$ construction intervals contained in $J^{i} = B(x,r_{i})$, we have $d(e_{i},\tilde{e}_{i}) \geq cr_i/N_0$ for any distinct $e_{i},\tilde{e}_{i} \in E_{i}$, where $c > 0$ is some absolute constant. The points $e_{i_{j}}$ and $\tilde{e}_{i_{j}}$ must be distinct for all large enough $j$, so the inequality
\begin{displaymath} d(z,\tilde{z}) = \frac{d(y_{i_{j}},\tilde{y}_{i_{j}})}{r_{i_{j}}} \geq \frac{d(e_{i_{j}},\tilde{e}_{i_{j}}) - d(y_{i_{j}},e_{i_{j}}) - d(\tilde{y}_{i_{j}},\tilde{e}_{i_{j}})}{r_{i_{j}}}, \qquad j \in \N, \end{displaymath}
then shows that $d(z,\tilde{z}) \geq c/N_{0}$. This proves that the set $E$ is finite. 

Now, if $\liminf_{i} d(y_{i},E_{i})/r_{i} > 0$ for all $z \in U(0,1)$, we set $E = \emptyset$. Otherwise, we construct $E$ as above, in \eqref{setE}. Even in the latter case, we simplify the notation by writing $(i_{j})_{j \in \N} = (i)_{i \in \N}$, which means that $\liminf_{i} d(y_{i},E_{i})/r_{i} > 0$ for all $z \in U(0,1) \setminus E$.

Fix $z \in U(0,1) \setminus E$. Since $\liminf d(y_{i},E_{i})/r_{i} > 0$, there exists $0 < \delta_z < 1-|z|$ and $i_z \in \N$ such that $d(y_{i},E_{i}) \geq 2\delta_z r_{i}$ for all $i \geq i_z$. As $\ell(I_{K(i)})/r_{i} = \rho_{i} \nearrow \infty$ by Lemma \ref{technical}(iii), we have
$$\ell(J^{i}) = 2r_{i} < \min\{l_{3,6^8C_{5}^8}/2,1/20\} \ell(I_{K(i)})$$ 
for $i \geq i_z$, choosing a larger $i_{z}$ if necessary (at this point, recall Lemma \ref{muprop}(2)). Here $C_5$ is the constant from \eqref{doubling} with $R = 5$. Since $\nu$ satisfies Lemma \ref{technical}(i), inequality \eqref{doubling} shows that 
$$\mu(5J^{i}) \leq C_5 \mu(J^{i}).$$ 
Moreover, combining (iv) and (v) of Lemma \ref{technical} yields 
$$I_{K(i)+1} \subset J^{i} \subset 5J^{i} \subset I_{K(i)}.$$ 
Hence Lemma \ref{muprop}(2) shows that all pairs of intervals in $\cF_{J^{i}}$, $i \geq i_z$, are $(D,1)$-comparable with $D = \max\{C(1/4)^{2},6^{25}C_{5}^{25}\}$. Take $0 < \delta < \delta_z$. If $i \geq i_z$, then, since $d(y_{i},E_{i}) \geq 2\delta_z r_{i}$, we have $J^{i}_{\delta} \subset I^{i}$, for some $I^{i} \in \cI_{K(i) + 1}$. For this $I^{i}$, we then have 
$$d(J^{i}_\delta,\partial I^{i}) > \delta_z\ell(I^i),$$
see Figure 5. 

\begin{figure}[h]
\label{fig5}
\begin{center}
\includegraphics[scale = 0.5]{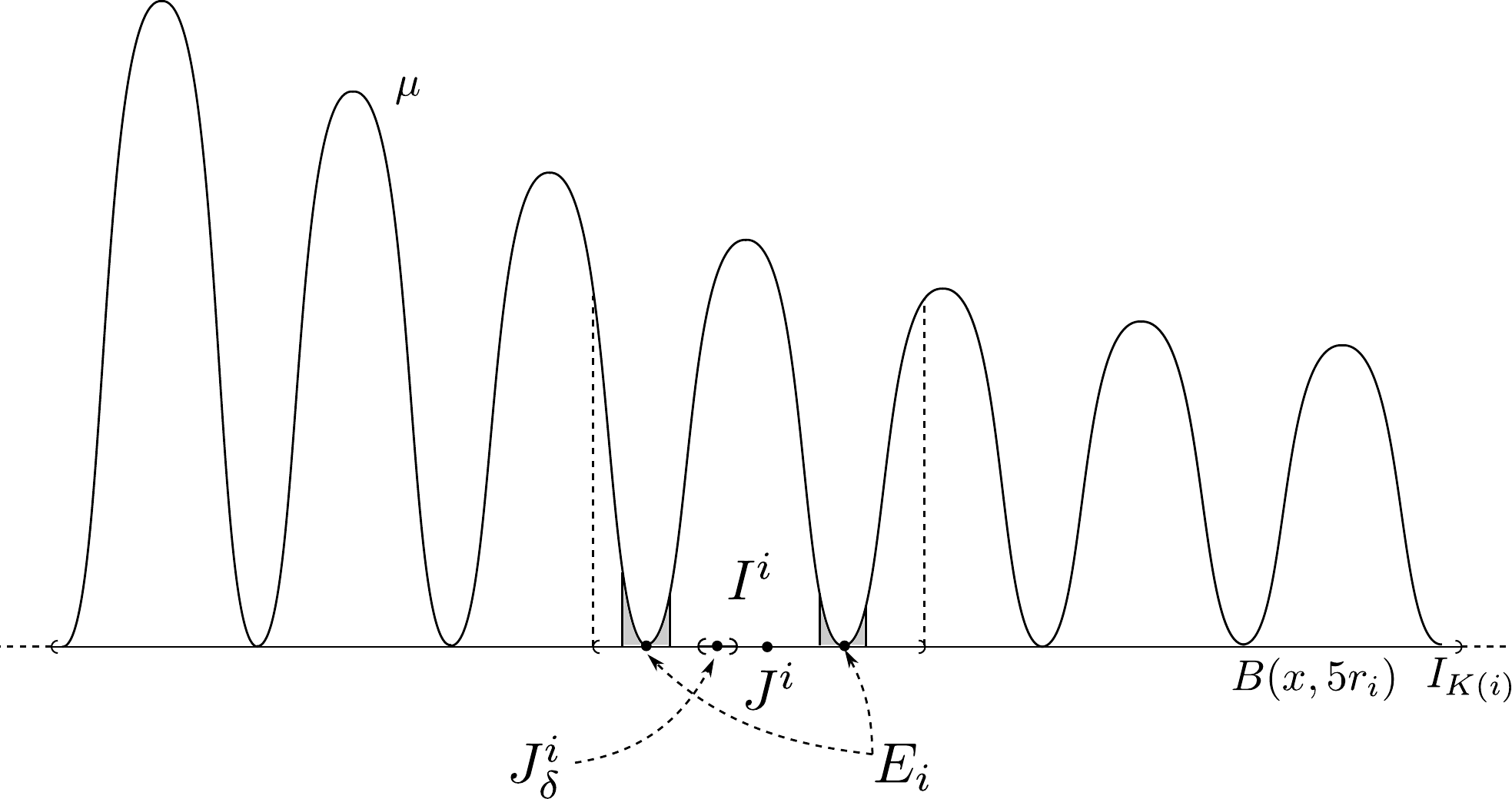}
\end{center}
\caption{The distribution of $\mu$ in the vicinity of the ball $J^{i}$. The interval $J^{i}_{\delta} \subset I^i$ cannot intersect the gray zone by the assumptions $d(y_i,E_i) \geq 2\delta_z r_i$ and $\delta < \delta_z$.}
\end{figure}

Choose $i_\delta \geq i_z$ such that we have 
$$2^{-K(i)} \leq \delta \quad \text{and} \quad K(i)+1 \geq k(\delta_z)$$ 
for all $i \geq i_\delta$ (recall Lemma \ref{muprop}(1) for the definition of $k(\delta_{z})$). Then, as $\ell(I^{i}) \leq \ell(J^i) = 2r_i$ (since $J^{i}$ contains an interval with the same generation $K(i) + 1$ as $I^{i}$), we obtain
$$\ell(J_\delta^{i}) = 2\delta r_{i} \geq \delta \ell(I^{i}) \geq 2^{-K(i)} \ell(I^{i}).$$ 
This means that for $i \geq i_\delta$ we have $\cP_{J^i_\delta} \neq \emptyset$, where the packing $\cP_{J^{i}_{\delta}}$ is now defined using intervals in $\cI_{K(i) + 2}$. Hence $J^{i}_{\delta}$ satisfies the assumptions of Lemma \ref{muprop}(1) with $k = K(i) + 1$, $J = J^i_\delta$, $I_{k} = I^{i}$ and $\tau = \delta_z$, which shows that $(J^{i}_\delta,I^{i})$ is $(C(\delta_z),\lambda)$-comparable with $\lambda = \ell(J^{i}_\delta)/\ell(I^{i})$. By definition of $N_{0}$, we have $\ell(J^{i}) \leq N_0 \ell(I^{i})$, whence
$$\delta = \frac{\ell(J^{i}_{\delta})}{\ell(J^{i})} \leq \lambda \leq N_0 \delta.$$ 
This shows the pair $(J^{i}_\delta,I^{i})$ is $(C(\delta_z)N_0,\delta)$-comparable. To finish the proof, let $I^{i}_{s}$ and $I^{i}_{l}$ be the intervals in $\cF_{J^{i}}$ with the smallest and largest $\mu$ measure, respectively. For $i \geq i_\delta$ all pairs of intervals in $\cF_{J^{i}}$ are $(D,1)$-comparable, so the inequalities
$$D^{-1} \mu(I^{i}) \leq \mu(I^{i}_{s}) \leq \mu(J^{i}) \leq N_{0}\mu(I^{i}_{l}) \leq DN_0\mu(I^{i}),$$
prove that $(I^{i},J^{i})$ is $(DN_{0},1)$-comparable. Hence the pair $(J_{\delta}^{i},J^{i})$ is $(c_z,\delta)$-comparable for $i \geq i_\delta$, where $c_z = C(\delta_z)DN_{0}^{2}$. 

This shows that Condition \ref{suffdensity} holds for $z \in U(0,1) \setminus E$.
\end{proof}

\begin{lemma}
\label{part2}
Suppose that $(K(i))_{i \in \N}$, $(r_i)_{i \in \N}$ and $\nu \in \Tan(\mu,x)$ satisfy the properties \emph{(ii)}, \emph{(iii)}, \emph{(iv)} and \emph{(v)} of Lemma \ref{technical}, and $\sup N_i =\infty$. Then Condition \ref{suffdensity} is met for every $z \in U(0,1)$.
\end{lemma}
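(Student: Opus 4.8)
The plan is to contrast the present situation with Lemma~\ref{part1}: here $\sup_i N_i = \infty$ means that the ball $J^i = B(x,r_i)$ is, along a subsequence, sliced into arbitrarily many generation $K(i)+1$ construction intervals, so after passing to that subsequence we may assume $N_i \to \infty$. First I would observe that this forces $\ell(I_{K(i)+1})/r_i \to 0$, i.e. the generation $K(i)+1$ intervals are tiny compared to the window radius $r_i$. Combined with Lemma~\ref{technical}(v), $B(x,5r_i) \subset I_{K(i)}$, every relevant interval $J^i_\delta = B(y_i,\delta r_i)$ with $z \in U(0,1)$ and $\delta$ small sits well inside $I_{K(i)}$, at distance comparable to $r_i$ (hence $\gg \ell(I_{K(i)+1})$) from $\partial I_{K(i)}$ — unless $z$ is near $\pm 1$, but $z \in U(0,1)$ is open, so for fixed $z$ we have a fixed safety margin. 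This is exactly the setup where the pull-backs behave like $\phi$ on a compact subinterval of $(-1,1)$, and Lemma~\ref{phiprop}(1)/Lemma~\ref{muprop}(1)-type comparability should apply cleanly.

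The key steps, in order: (1) pass to a subsequence with $N_i \to \infty$ and deduce $\ell(I_{K(i)+1})/r_i \to 0$; (2) fix $z \in U(0,1)$ and set $\delta_z = (1-|z|)/2$, so that for all small $\delta < \delta_z$ and all large $i$, both $J^i_\delta$ and $J^i = B(x,r_i)$ are contained in $I_{K(i)}$ at distance $\gtrsim r_i - |z|r_i \gtrsim \delta_z \ell(I_{K(i)})$ from $\partial I_{K(i)}$ — here I would use $\rho_i = \ell(I_{K(i)})/r_i \to \infty$ from Lemma~\ref{technical}(iii) only to know $r_i \ll \ell(I_{K(i)})$, and use (v) to get the interval containment; (3) check $\cP_{J^i_\delta} \neq \emptyset$ and $\cP_{J^i} \neq \emptyset$ for $i$ large, which follows because $N_i \to \infty$ guarantees $\ell(I_{K(i)+1}) \ll \delta r_i \le \ell(J^i_\delta)$; (4) apply Lemma~\ref{muprop}(1) with $\tau$ a fixed fraction of $\delta_z$ and $k = K(i)$ to both $J = J^i_\delta$ and $J = J^i$, obtaining that $(J^i_\delta, I_{K(i)})$ is $(C,\ell(J^i_\delta)/\ell(I_{K(i)}))$-comparable and $(J^i, I_{K(i)})$ is $(C,\ell(J^i)/\ell(I_{K(i)}))$-comparable with the same $C = C(\tau)$; (5) divide the two comparabilities to conclude that $(J^i_\delta, J^i)$ is $(C^2, \ell(J^i_\delta)/\ell(J^i)) = (C^2,\delta)$-comparable for all large $i$, which is precisely Condition~\ref{suffdensity} with $c_z = C(\tau)^2$ and $\delta_z$ as chosen. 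Since this works for every $z \in U(0,1)$, the exceptional set is empty.

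The main subtlety — and the place where the $N_i \to \infty$ hypothesis is genuinely used — is step~(3): verifying that both $J^i_\delta$ and $J^i$ actually contain a generation $K(i)+1$ (resp.\ $K(i)+2$) construction interval, since Lemma~\ref{muprop}(1) requires $\cP_J \neq \emptyset$. For $J^i$ this is immediate from $N_i \ge 1$, but for $J^i_\delta$ with $\delta$ small one needs $\delta r_i \gtrsim \ell(I_{K(i)+1})$, equivalently $\ell(I_{K(i)+1})/r_i \lesssim \delta$; this is exactly what $N_i \to \infty$ buys, because the $N_i$ intervals of $\cF_{J^i}$ all have length $\ell(I_{K(i)+1})$ and together cover $J^i$, so $\ell(I_{K(i)+1}) \le 2r_i/N_i \to 0$ relative to $r_i$. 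One should also double-check the edge case where $J^i_\delta$ straddles a boundary point of a generation $K(i)+1$ interval; but since $J^i_\delta$ is deep inside $I_{K(i)}$ and has length $\gg \ell(I_{K(i)+1})$, the packing $\cP_{J^i_\delta}$ (in generation $K(i)+2$) is nonempty and Lemma~\ref{muprop}(1) applies with $I_k$ replaced by $I_{K(i)}$ directly, bypassing the need to land inside a single child interval. I would also note that, unlike Lemma~\ref{part1}, here there is no need for the finer Lemma~\ref{muprop}(2): the "many small intervals" regime is the benign one, where mass distributes essentially like Lebesgue measure on the compact piece $[-1+\tau,1-\tau]$ of the pull-back, and a single application of part~(1) suffices.
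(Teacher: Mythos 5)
There is a genuine gap, and it sits exactly where you claim the argument is ``benign''. In step (2) you assert that $J^i$ and $J^i_\delta$ lie at distance $\gtrsim \delta_z\,\ell(I_{K(i)})$ from $\partial I_{K(i)}$, deducing this from $r_i - |z|r_i \gtrsim \delta_z r_i$. But $\delta_z r_i = \delta_z\,\ell(I_{K(i)})/\rho_i$, and Lemma \ref{technical}(iii) gives $\rho_i \nearrow \infty$, so this quantity is $o(\ell(I_{K(i)}))$: property (v) only guarantees $d(J^i,\partial I_{K(i)}) \geq 4r_i = (4/\rho_i)\ell(I_{K(i)})$, a vanishing fraction of $\ell(I_{K(i)})$. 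Hence the hypothesis $d(J,\partial I_k)\geq \tau\ell(I_k)$ of Lemma \ref{muprop}(1) fails for every fixed $\tau>0$ once $i$ is large, and steps (4)--(5) collapse. This is not a technicality: the point $x$ typically hugs $\partial I_{K(i)}$ at scale $r_i$ (Lemma \ref{nondoubling} is built on exactly this), the pull-backs of the intervals in $\cF_{J^i}$ then sit near $\pm 1$ where $\phi$ degenerates, and adjacent children of $I_{K(i)}$ can have wildly different $\mu$-masses. So your closing remark that Lemma \ref{muprop}(2) is not needed here is backwards --- it is indispensable.

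The paper's proof handles this as follows. It uses Remark \ref{doublingremarkk} (the mere existence of $\nu$ forces $\mu(5J^i)\leq C_5\mu(J^i)$ via \eqref{doubling} with $R=5$) together with (iii), (iv), (v) to verify the hypotheses of Lemma \ref{muprop}(2); that lemma then yields $(D,1)$-comparability of \emph{all} pairs of intervals in $\cF_{J^i}$, even when they crowd against $\partial I_{K(i)}$. After that the conclusion is pure counting: $J^i_\delta$ meets between $\delta N_i/3$ and $3\delta N_i$ of these intervals while $J^i$ meets $N_i$ of them, so $(J^i_\delta,J^i)$ is $(9D,\delta)$-comparable. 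Your subsequence extraction, the observation $\ell(I_{K(i)+1})\leq 2r_i/N_i$, and the verification of $\cP_{J^i_\delta}\neq\emptyset$ in step (3) are all correct and match the paper; it is the replacement of Lemma \ref{muprop}(2) by Lemma \ref{muprop}(1) that does not survive scrutiny.
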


\begin{proof}Passing to a subsequence, we may assume that $N_{i} \nearrow \infty$. Fix $z \in U(0,1)$ and let $\delta_z = 1-|z|$. Fix $0 < \delta < \delta_z$. Choose $i_\delta \in \N$ so large that the packing $\cP_{J_\delta^i} \neq \emptyset$ for all $i \geq i_\delta$. Then for these $i$ we can estimate
$$\delta N_{i}/3 \leq \card \cP_{J^{i}_{\delta}} \leq \card\cF_{J^{i}_{\delta}} \leq 3\delta N_{i},$$
where the packing $\cP_{J^{i}_{\delta}}$ and the cover $\cF_{J^{i}_{\delta}}$ are here defined using intervals in $\cI_{K(i) + 1}$. Since $\rho_i \nearrow \infty$ by Lemma \ref{technical}(iii), we can make $i_\delta$ larger if necessary, to have
$$\ell(J^{i})/\ell(I_{K(i)}) = 1/\rho_{i} < \min\Big\{l_{3,6^8C_5^8}/2,1/20\Big\}$$
for $i \geq i_\delta$. As in the proof of Lemma \ref{part1}, if we invoke \eqref{doubling} and the conditions (ii), (iii), (iv) and (v) of Lemma \ref{technical}, this inequality again yields $(D,1)$-comparability for all pairs of intervals in $\cF_{J^{i}}$, as soon as $i \geq i_\delta$. In particular, this holds for the intervals in $\cF_{J^{i}}$ with the smallest and largest $\mu$ measure, denoted $I^{i}_{s}$ and $I^{i}_{l}$. The inequalities
$$\delta N_{i} \mu(I^{i}_{s})/3 \leq \mu(J^{i}_\delta) \leq 3\delta N_{i} \mu(I^{i}_{l})$$
and
$$N_{i} \mu(I^{i}_{s})/3 \leq \mu(J^{i}) \leq N_{i} \mu(I^{i}_{l}),$$
valid for $i \geq i_\delta$, now prove that the pair $(J_{\delta}^{i},J^{i})$ is $(c_z,\delta)$-comparable for $i \geq i_\delta$, where $c_z = 9D$. 

This shows that Condition \ref{suffdensity} holds for $z \in U(0,1)$.
\end{proof}

The proof of Theorem \ref{main} is complete. 

\section{Acknowledgments}

The authors are grateful to P. Mattila and V. Suomala for valuable comments. We would also like to thank an anonymous referee for making useful suggestions for improvement throughout the manuscript.


\begin{thebibliography}{99}

\bibitem{ref1}
\textsc{J--P. Eckmann, E. J\"arvenp\"a\"a \and M. J\"arvenp\"a\"a.} \textit{Porosities and dimensions of measures.} Nonlinearity \textbf{13} (2000), 1--18.	

\bibitem{ref2}
\textsc{K. J. Falconer.} \textit{Fractal geometry: mathematical foundations and applications.} John-Wiley \& Sons Ltd (2003).

\bibitem{ref3}
\textsc{D. Freedman \and J. Pitman.} \textit{A measure which is singular and uniformly locally uniform.} Proc. Amer. Math. Soc. \textbf{108} (1990), 371--381.	

\bibitem{ref4}
\textsc{P. Mattila.} \textit{Geometry of sets and measures in euclidean spaces: fractals and rectifiability.} Cambridge University Press (1995).

\bibitem{ref5}
\textsc{M. E.  Mera \and M. Mor\'{a}n.} \textit{Attainable values for upper porosities of measures.} Real Anal. Exchange \textbf{26} (2001), 101--116.

\bibitem{ref6}
\textsc{M. E. Mera, M. Mor\'{a}n, D. Preiss, L. Zaj\'{i}\v{c}ek.} \textit{Porosity, $\sigma$-porosity and measures.} Nonlinearity \textbf{16} (2003), 493--512.

\bibitem{ref7}
\textsc{D. Preiss.} \textit{Geometry of measures in $\R^d$: distribution, rectifiability, and densities.} Ann. of Math. \textbf{125} vol. 3 (1987), 537--643.

\bibitem{ref8}
\textsc{V. Suomala.} \textit{Upper porous measures on metric spaces.} Illinois J. Math. \textbf{52} vol. 3 (2009), 967--980.
	
\end{thebibliography}
\end{document}